\documentclass[11pt,leqno]{article}
\usepackage{amsthm,amsfonts,amssymb,amsmath,oldgerm}
\usepackage{epsfig}
\numberwithin{equation}{section}
\usepackage[thinlines]{easybmat}


\setlength{\evensidemargin}{0in} \setlength{\oddsidemargin}{0in}
\setlength{\textwidth}{6in} \setlength{\topmargin}{0in}
\setlength{\textheight}{8in}



\def\eps{\varepsilon }


\def\eps{\varepsilon}


\newcommand\br{\begin{remark}}
\newcommand\er{\end{remark}}
\newcommand\bp{\begin{pmatrix}}
\newcommand\ep{\end{pmatrix}}
\newcommand\be{\begin{equation}}
\newcommand\ee{\end{equation}}
\newcommand\ba{\begin{equation}\begin{aligned}}
\newcommand\ea{\end{aligned}\end{equation}}

\newcommand\nn{\nonumber}


\newcommand{\bap}{\begin{app}}
\newcommand{\eap}{\end{app}}
\newcommand{\begs}{\begin{exams}}
\newcommand{\eegs}{\end{exams}}
\newcommand{\beg}{\begin{example}}
\newcommand{\eeg}{\end{exaplem}}
\newcommand{\bpr}{\begin{proposition}}
\newcommand{\epr}{\end{proposition}}
\newcommand{\bt}{\begin{theorem}}
\newcommand{\et}{\end{theorem}}
\newcommand{\bc}{\begin{corollary}}
\newcommand{\ec}{\end{corollary}}
\newcommand{\bl}{\begin{lemma}}
\newcommand{\el}{\end{lemma}}
\newcommand{\bd}{\begin{definition}}
\newcommand{\ed}{\end{definition}}
\newcommand{\brs}{\begin{remarks}}
\newcommand{\ers}{\end{remarks}}

\newtheorem{theo}{Theorem}[section]

\newtheorem{lem}[theo]{Lemma}

\newtheorem{rem}[theo]{Remark}

\newtheorem{exams}[theo]{Examples}

\numberwithin{equation}{section}


\newcommand{\CalT}{\mathcal{T}}

\newcommand{\MM}{{\mathbb M}}

\newcommand{\sgn}{\text{\rm sgn}}
\newtheorem{theorem}{Theorem}[section]
\newtheorem{proposition}[theorem]{Proposition}
\newtheorem{corollary}[theorem]{Corollary}
\newtheorem{lemma}[theorem]{Lemma}
\newtheorem{definition}[theorem]{Definition}

\newtheorem{example}[theorem]{Example}
\newtheorem{remark}[theorem]{Remark}


\newcommand\cM{{\mathcal M}}



\pagestyle{headings}

\newtheorem{thm}{Theorem}
\newtheorem{corr}{Corollary}

\newcommand{\RM}{\mathbb{R}}

\newcommand{\CM}{\mathbb{C}}
\newcommand{\WM}{\,\mbox{\bf W}}

\newcommand{\HM}{\,\mbox{\bf H}}

\newcommand{\tr}{\,\mbox{\rm tr}}

\newcommand{\cn}{\operatorname{cn}}
\newcommand{\dn}{\operatorname{dn}}

\title{
Transverse Instability of Periodic Traveling Waves in the Generalized Kadomtsev-Petviashvili Equation
}


\author{\sc \small
Mathew A. Johnson\thanks{Indiana University, Bloomington, IN 47405;
matjohn@indiana.edu: Research of M.J. was partially supported by an NSF Postdoctoral Fellowship under NSF grant DMS-0902192.}
\and
Kevin Zumbrun\thanks{Indiana University, Bloomington, IN 47405;
kzumbrun@indiana.edu:
Research of K.Z. was partially supported
under NSF grants no. DMS-0300487 and DMS-0801745.
 }}
\begin{document}

\maketitle


\begin{abstract}
In this paper, we investigate the spectral instability of
periodic traveling wave solutions of the
generalized Korteweg-de Vries equation to long wavelength transverse perturbations
in the generalized Kadomtsev-Petviashvili equation.  By analyzing high and low
frequency limits of the appropriate periodic Evans function,
we derive an orientation index which yields sufficient
conditions for such an instability to occur.  This index is geometric in nature and applies
to arbitrary periodic traveling waves with minor smoothness and convexity assumptions
on the nonlinearity.  Using the integrable structure of the ordinary differential equation governing the traveling wave
profiles, we are then able to calculate the resulting orientation index for the elliptic function
solutions of the Korteweg-de Vries and modified Korteweg-de Vries equations.
\end{abstract}




\bigbreak
\section{Introduction}
The Korteweg-de Vries equation
\begin{equation}
u_t=u_{xxx}+uu_x\label{eqn:kdv}
\end{equation}
often arises as an model for one-dimensional long wavelength surface waves propagating
in weakly nonlinear dispersive media, as well as the evolution of weakly nonlinear ion acoustic waves in plasmas \cite{TRR}.
When the assumption that the wave is purely one-dimensional is relaxed to allow for
weak dependence in a transverse direction one is led to a variety of multidimensional generalizations
of the KdV.  One of the most well studied weakly two-dimensional variations of the KdV is
the Kadomtsev-Petviashvili (KP) equation [KP] given by
\begin{equation}
\left(u_t-u_{xxx}-uu_x\right)_x+\sigma u_{yy}=0,\label{eqn:KP}
\end{equation}
where the constant $\sigma=\pm 1$ differentiates between equations with positive ($\sigma=+1$)
and negative ($\sigma=-1$) dispersion and the choice of $\sigma$ depends
on the exact physical phenomenon being described\footnote{Notice
that one can always rescale the $y$ variable to force $\sigma$ to be any (non-zero) real number.  However,
for convenience, we will always assume that $\sigma=\pm 1$.}.  For instance, if $\sigma=+1$
\eqref{eqn:KP} is referred to as the KP-I equation, which can be used to model waves in thin
films with high surface tension, while \eqref{eqn:KP} is called the KP-II equation in when $\sigma=-1$,
which can be used to model water waves with small surface tension.  Various other physical applications utilize equations of the
form \eqref{eqn:KP}, such as the modeling of small amplitude internal waves and in the study of unmagnetized dusty
plasmas with variable dust charge \cite{PJ}.

In many applications, appropriate scaling in the physical parameters introduces a parameter
$\alpha>0$ in the nonlinearity yielding a governing equation of the form
\[
u_t=u_{xxx}+\alpha uu_x.
\]
In neighborhoods of parameter space where $\alpha=0$ one is forced to consider higher
order expansions in the nonlinearity, the most natural being of the form
\[
u_t=u_{xxx}+\beta u^2u_x
\]
where $\beta\neq 0$.  The choice of the sign of $\beta$ must be made depending on
the particular physical situation being studied.  In particular, the choice of $\beta$
clearly determines the structure of the stationary homoclinic/heteroclinic solutions
and hence the case of $\beta>0$ and $\beta<0$ define quite distinct dynamics.  In this case
the modified KP equation
\[
\left(u_t-u_{xxx}-\beta u^2u_x\right)_x+\sigma u_{yy}=0
\]
arises naturally as a weakly two-dimensional generalization.  In the context
of dusty plasmas with variable dust charge, the mKP equation can be derived near the ``critical" density case (see \cite{PJ})
for details.

As our theory will not depend on the
explicit form of the nonlinearity, and to encompass as many physical applications as possible,
we will most often work with the generalized KdV (gKdV) equation
\begin{equation}
u_t=u_{xxx}+f(u)_x\label{gkdv}
\end{equation}
and its weakly two-dimensional variation the generalized KP equation
\begin{equation}
\left(u_t-u_{xxx}-f(u)_x\right)_x+\sigma u_{yy}=0\label{gkp}
\end{equation}
where the nonlinearity $f$ is sufficiently smooth and satisfies general convexity assumptions.  For such nonlinearities,
the gKdV equation admits asymptotically constant traveling solutions, known as solitary waves, as well as traveling waves
which are spatially periodic.  It is the latter case we consider here.  As a solution $u(x,t)$ of \eqref{gkdv} is clearly a $y$-independent solution
of \eqref{gkp}, it seems natural to question the stability of such a solution to perturbations which
have a nontrivial dependence on the transverse (y) direction.  Such a transverse instability analysis
is the subject of the current paper: in particular, we study the spectral stability of a stable $y$-independent
spatially periodic traveling wave solution of the gKdV equation to perturbations which are
co-periodic\footnote{Notice by Floquet theory, one-dimensional spectral instability to co-periodic perturbations
implies spectral instability to localized perturbations.} in $x$ with low frequency oscillations in the transverse direction.
To this end, we will loosely follow the general Evans function approach of Gardner \cite{G1} with additional aspects of analysis from the
more recent work of Johnson \cite{J2}.

The transverse instability of solitary waves of the KdV in the KP equation was first conducted by Kadomtsev and Petviashvili \cite{KP}, where
it was found that such solutions are stable to transverse perturbations in the
case of negative dispersion, while they are unstable to long wavelength transverse
perturbations in the case of positive dispersion (even though they are stable in the corresponding one-dimensional problem).  Moreover, in \cite{APS} it was shown
that a short wavelength cutoff for instability exists for the positive dispersion case and the dominate mode of instability was identified.
Other authors have devised various techniques to demonstrate the transverse instability of KdV solitary waves in the KP-I equation: for example,
see the pioneering work of Zakharov \cite{Za}, where the author utilizes the integrability of the KP-I equation via the inverse scattering
transform, and the recent work of Rousset and Tzvetkov \cite{RT3}, where the authors use general PDE techniques\footnote{The methods of Rousset and Tzvetkov provide nonlinear transverse instability of the KdV solitary wave, and has
the advantage over that of Zakharov of generalizing to the full water wave problem in the presence of surface tension.}.
The transverse instability of a one-dimensionally stable gKdV solitary wave in the corresponding gKP equation
has recently been considered in \cite{KTN} using perturbation analysis similar to that of Kadomtsev and Petviashvili.  In particular, 
multiple scale analysis was used to derive an evolution equation for the wave velocity to describe the slow-time response
of the solitary wave in response to the long wavelength transverse perturbations.
The authors conclude that for positive dispersion the solitary waves of the gKdV equation are always unstable to long wavelength
transverse perturbations in the gKP equation.  
Moreover, it was found that for some nonlinearities the solitary waves may in fact be unstable in the case of negative
dispersion.

In the case where the background solution of the gKdV is spatially periodic, there do not seem to be
any results concerning the transverse instability in the gKP equation and, moreover, the stability of such
solutions in general is much less understood.  
This reflects the fact that the spectrum of the corresponding linearized operators
is purely continuous, and hence it seems more difficult for nonlinear periodic waves of the gKdV to be stable than their solitary
wave counterparts.  Moreover, the periodic waves of the gKdV in general have a much more rich structure
than the solitary waves: even in the case of power-law nonlinearities it is not possible to write down a
general elementary representative for all periodic solutions of the gKdV, which stands in contrast to the
solitary wave theory.  Nevertheless, there has been much study recently into the one-dimensional stability of such solutions
under the gKdV flow with results ranging from spectral stability to localized perturbations (see \cite{BD}, \cite{BrJ}, and \cite{HK}) to
nonlinear (orbital) stability to co-periodic perturbations (see \cite{BrJK}, \cite{DK}, and \cite{J1}).
We also want to stress the fact that the periodic solutions are also 
physically relevant as they are used to model nonlinear wave trains: such patterns are prevalent in a variety of applications and
their instabilities have been studied extensively in the literature (see for example the classic works of
Benjamin \cite{Be2}, Benjamin and Feir \cite{BF}, Lighthill \cite{L}, and Whitham\cite{W}.).

The goal of this paper is to perform a transverse instability study
in the case of spatially periodic traveling wave solutions which are stable to perturbations
in the $x$ direction of unidirectional propagation.  To this end, we will develop a somewhat nonstandard orientation index
which detects instability of a $T$-periodic traveling wave of the gKdV to perturbations in the gKP equation
which are $T$-periodic in the $x$-direction with low frequency oscillations in the transverse spatial variable $y$.
Throughout this paper, we will refer to such perturbations as long wavelength transverse perturbations.  This is accomplished
by studying the behavior of the corresponding periodic Evans function in both the high (real spectral) frequency and low (transverse) frequency
limits when the wave number of the transverse perturbation is small but non-zero.  As we will see, the high frequency
analysis is somewhat delicate due to a degeneracy in the KP equation.  In particular,
it is seen that one must take into account not only the higher order effects of the dependence of
the limiting asymptotic ordinary differential equation on the background solution as in \cite{MaZ3} and \cite{PZ},
but also the inherent averaging/cancellation effects due to the periodicity.  Such a result seems new to the literature,
and relies on the use of block-triangularizing transformations as used in \cite{HLZ}.  Moreover, in
the appendix we give a slight simplification of the of the tracking lemmas utilized in \cite{MaZ3}, \cite{PZ},
and \cite{HLZ} by formulating the result in terms of conjugating transformations rather than invariant graphs.
This view point seems more closely related to the style and needs of current research
(see for example the related treatments in \cite{Z6} and \cite{NZ}).  As a result, we will find
that the limiting behavior of the \emph{sign} of the periodic Evans function for large (real) spectral frequency is governed precisely by
the sign of the dispersion parameter $\sigma$.  Thus, the stability of our periodic traveling waves
depends directly on the case of dispersion chosen in the gKP equation, and hence on the exact physical
phenomenon being described.

The low frequency analysis utilizes matrix perturbation theory and the methods of \cite{BrJ} and \cite{J2}.
In particular, we will explicitly construct four linearly independent stationary solutions of the linearization of the
governing PDE: three of these will be given to us by variations in the traveling wave parameters, which we will use
to parameterize the traveling wave solutions of \eqref{gkdv}, while the fourth can be constructed using standard
techniques.  Using variation of parameters then, we can compute the leading order variation of these four
functions in the transverse wave number $k$ for $|k|\ll 1$, and hence can determine the leading order
variation of the periodic Evans function in the transverse wave number at the origin in the spectral plane.
This leading order variation is expressed as a Jacobian determinant relating to the ability to parameterize
near by periodic waves of fixed wavespeeds by the conserved quantities of the gKdV flow.  It follows then that
if this determinant has the opposite sign of that of the high frequency limit (the parameter $\sigma$), we immediately
have a spectral instability to long wavelength transverse perturbations in the gKP equation.  Notice this approach
is somewhat different than that of \cite{J2}, where transverse instability in the generalized Zakharov-Kuznetsov equation
was established by finding sufficient conditions for eigenvalues bifurcating from the origin (in $k$) to enter the unstable
half plane.  In that case, the low frequency analysis involved considering variations of elements of the kernel
of the linearized operator in both the spectral variable and in the transverse wave number.  As a result,
the results in \cite{J2} concern spectral instability to long wavelength perturbations in \emph{both} the
direction of propagation of the background solution and the transverse direction.  The orientation index
derived in our case thus seems to be nonstandard, in the sense that it detects instabilities
to perturbations which have bounded period in the $x$-direction and admit slow modulations in the transverse
direction.

The outline of this paper is as follows.  In section 2, we review the basic properties
of the periodic traveling wave solutions of the gKdV equation \eqref{gkdv}.  In particular, we will discuss
a parametrization of the traveling wave solutions of \eqref{gkdv} which will be useful throughout
our analysis.  In section 3, we conduct our transverse instability analysis
by first conducting a high frequency analysis of the associated periodic Evans function and then
conducting the corresponding low frequency analysis.  As a result, we will have an instability index
which guarantees that a periodic traveling wave solution of the gKdV is spectrally unstable to long wavelength
transverse perturbations in the gKP equation.  This index can be calculated exactly in several model cases and the
relevant results will be given.  In section 4, we end with some closing remarks and in the appendix
we present a proof of the tracking lemma utilized in the high frequency analysis and also
outline the proof of a formula crucial to the low frequency analysis.

\section{Properties of Periodic Traveling GKdV Waves}

In this section, we review some of the basic properties of the periodic traveling wave solutions
of the gKdV equation.  For more details, see \cite{BrJ} or \cite{J1}.  For each $c>0$, a traveling wave with speed $c$ is a solution
of the ordinary differential equation
\begin{equation}
u_{xxx}+f(u)_x-cu_x=0,\label{travelode}
\end{equation}
i.e. they are stationary solutions of \eqref{gkdv} in the moving coordinate frame defined by $x+ct$.
This equation is clearly Hamiltonian and hence we can reduce it to quadrature.  Indeed, by
integrating \eqref{travelode} twice we see that a traveling wave profile of the gKdV must
satisfy the nonlinear oscillator equation
\begin{equation}
\frac{u_x^2}{2}=E+au+\frac{c}{2}u^2-F(u),\label{quad1}
\end{equation}
where $F$ is an antiderivative of the nonlinearity $f$ satisfying $F(0)=0$ and
$a$ and $E$ are constants of integration.  Thus, the traveling waves form a four
parameter family of solutions of \eqref{gkdv} described by the constants $a$, $E$, and $c$
together with a fourth constant of integration corresponding to a translation mode: this
translation direction is simply inherited from the translation invariance of \eqref{gkdv}
and hence can be modded out.  It follows that on open subsets of $\RM^3=(a,E,c)$ equation \eqref{quad1}
admits a periodic orbit.  Moreover, the boundary of these open subsets correspond to
solutions which decay asymptotically at infinity and hence can be identified with
the solitary wave solutions.  In particular, notice that in order for \eqref{quad1} to
admit a solitary wave solution, the constant $E$ must be fixed by the prescribed boundary conditions
and we must have $a=0$.  It follows the solitary waves form a codimension two
subset of the family of traveling waves.

In general, equation \eqref{gkdv} admits three conserved quantities.  In order to define these, let
\[
V(u;a,c)=F(u)-au-\frac{c}{2}u^2
\]
be the effective potential arising in the nonlinear oscillator equation \eqref{quad1}.  Throughout
this paper, we will assume the roots $u_{\pm}$ of the equation $E=V(u;a,c)$ are simple, satisfy
$u_-<u_+$, and that $V(u;a,c)<E$ for $u\in(u_-,u_+)$.  As a consequence, $u_{\pm}$ are $C^1$
functions of the traveling wave parameters $a$, $E$, and $c$ and, without loss of generality,
we can set $u(0)=u_{-}$.  It follows that we can express the period of a periodic solution
of \eqref{travelode} via the formula
\[
T=T(a,E,c)=\sqrt{2}\int_{u_{-}}^{u_{+}}\frac{du}{\sqrt{E-V(u;a,c)}}.
\]
By a standard procedure, the above integral can be regularized at the square root
branch points and hence represents a $C^1$ function of $a$, $E$, and $c$.  Similarly,
the conserved quantities of the gKdV flow can be represented as
\begin{align*}
M(a,E,c) &=  \int_0^T u(x)\; dx = 2\int_{u_-}^{u_+} \frac{u\; du}{\sqrt{2\left(E-V(u;a,c)\right)}}\\
P(a,E,c) &= \int_0^T u^2(x)\; dx = 2\int_{u_-}^{u_+} \frac{u^2\; du}{\sqrt{2\left(E-V(u;a,c)\right)}}\\
H(a,E,c) &=  \int_0^T\left( \frac{u_x^2}{2} - F(u)\right)dx =  2\int_{u_-}^{u_+} \frac{E-V(u;a,c) - F(u)}{\sqrt{2\left(E-V(u;a,c)\right)}}\;du.
\end{align*}
representing the mass, momentum, and Hamiltonian, respectively.  As above, these integrals can be regularized at the branch points
and hence represent $C^1$ functions of the traveling wave parameters.  As we will see, the gradients of the period and mass
of the solution $u$  will play a very large role in this paper.  However, as pointed out in \cite{BrJ}, when $E\neq 0$ gradients in the period
can be interchanged for gradients of the conserved quantities via the relation
\[
E \nabla_{a,E,c} T + a \nabla_{a,E,c} M + \frac{c}{2} \nabla_{a,E,c} P + \nabla_{a,E,c} H = 0.
\]
where $\nabla_{a,E,c}=\left<\partial_a,\partial_E,\partial_c\right>$.  Thus, all gradients involved in the
results of this paper can be expressed \textit{completely} in terms of the gradients of the conserved
quantities of the gKdV flow, which seems to be desired from a physical point of view.

We now discuss our parametrization of the family of periodic traveling wave solutions of \eqref{gkdv} more carefully.
A major technical assumption throughout this paper is that the period
and mass provide good local coordinates for the periodic traveling waves of fixed wave speed $c>0$.
More precisely, given a periodic traveling wave $u(\cdot;a_0,E_0,c_0)$ of \eqref{gkdv} with $c_0>0$ we assume the map
\[
(a,E)\mapsto\left(T(a,E,c_0),M(a,E,c_0)\right)
\]
have a unique $C^1$ inverse in a neighborhood of $(a_0,E_0)\in\RM^2$, which is clearly
equivalent with the non-vanishing of the Jacobian determinant
\[
\{T,M\}_{a,E}:=\det\left(\frac{\partial(T,M)}{\partial(a,E)}\right)
\]
at the point $(a_0,E_0,c_0)$.
As we will see, the sign of this Jacobian controls the low frequency analysis presented in this paper.
It is worth mentioning that while this may seem like a rather obscure requirement, this Jacobian
has already shown to be important in the stability theory of periodic traveling waves of the gKdV: see
for example \cite{BrJ}, \cite{BrJK}, \cite{J1}, and \cite{J2}.  In particular, this Jacobian
been computed in \cite{BrJK} for several power-law nonlinearities and, in these cases, has been
shown to be generically non-zero.  Moreover, such a non-degeneracy condition should not be
surprising: a similar non-degeneracy condition must often be enforced in the stability
theory for solitary waves (see \cite{Bo}, \cite{Be2}, and \cite{PW}).

\section{Transverse Instability Analysis}

We now begin our stability analysis.  Let $u=u(\cdot;a,E,c)$ be a $T=T(a,E,c)$-periodic
traveling wave solution of \eqref{gkdv}.  Moreover, we assume that $u$ is a stable solution
of the one-dimensional gKdV equation\footnote{Else the issue of transverse instability is of no interest, as instabilities
to unidirectional perturbations will prevent stability to higher dimensional perturbations.}.  As noted in the introduction, it is clear then that $u$ is a $y$-independent
solution of the generalized KP equation \eqref{gkp} for either $\sigma=\pm 1$.  We are interested in the spectral
stability of $u$ as a solution of \eqref{gkp} to small perturbations.  To this end, consider a small perturbation
of $u$ of the form
\[
\psi(x,y,t)=u(x)+\eps v(x,y,t)+\mathcal{O}(\varepsilon^2),~~|\eps|\ll 1
\]
where $v(\cdot,y,t)\in L^2(\RM)$ for each $(y,t)\in\RM^2$ and $v(x,\cdot,t)\in L^\infty(\RM)$ for each $(x,t)\in\RM^2$.
Forcing $\psi$ to solve the traveling gKP equation
\begin{equation}\label{travelgkp}
\left(u_t-u_{xxx}-f(u)_x+cu_{x}\right)_x+\sigma u_{yy}=0,
\end{equation}
yields a hierarchy of consistency conditions.  The $\mathcal{O}(\eps^0)$ equation
clearly holds since $u$ solves \eqref{travelgkp}, and the $\mathcal{O}(\eps^1)$ equation reads as
\[
\partial_x\left(\partial_t+\partial_x\mathcal{L}[u]\right)v+\sigma v_{yy}=0
\]
where $\mathcal{L}[u]=-\partial_x^2-f'(u)+c$ is a periodic Hill operator.
As this linearized equation is autonomous in both time and the spatial variable $y$, we may seek separated solutions
of the form
\[
v(x,y,t)=e^{-\mu t+iky}v(x)
\]
where $\mu\in\CM$, $k\in\RM$, and $v\in L^2(\RM)$.  This leads one to the (generalized) spectral problem
\begin{equation}
\left(\partial_x^2\mathcal{L}[u]-\sigma k^2\right)v=\mu \partial_x v\label{spec}
\end{equation}
considered on the real Hilbert space $L^2(\RM)$.  We refer to the background solution $u$ as being spectrally stable
in $L^2(\RM)$ if \eqref{spec} has no $L^2(\RM)$ spectrum with\footnote{Usually, one defines spectral stability as the absence
of spectrum with positive real part.  However, in our case the spectrum is symmetric about the imaginary axis and hence
spectral stability is equivalent with the spectrum being confined to the imaginary axis.} $\Re(\mu)\neq 0$ for any $k\in\RM$.

Since the coefficients of the differential operator $\mathcal{L}[u]$ are $T$-periodic, as they depend on the background solution $u$,
standard results in Floquet theory implies the $L^2$ spectrum of \eqref{spec}
is purely continuous, and consists entirely of $L^\infty(\RM)$ eigenvalues.  Indeed, the fact that \eqref{spec}
can have no $L^2(\RM)$ eigenvalues is clear: writing \eqref{spec} a first order system of the form
\[
{\bf Y}_x=\HM(x;\mu,k){\bf Y}
\]
and letting $\Phi(x;\mu,k)$ be a matrix solution satisfying the initial condition $\Phi(0;\mu,k)={\bf I}$ for all
$(\mu,k)\in\CM\times\RM$, we define the monodromy operator, or the period map, to be
\[
\MM(\mu):=\Phi(T;\mu,k).
\]
Notice that given any vector solution ${\bf Y}$ of \eqref{spec}, the monodromy operator is a matrix such that
\[
\MM(\mu){\bf Y}(x;\mu,k)={\bf Y}(x+T;\mu,k)
\]
for all $(x,\mu,k)\in\RM\times\CM\times\RM$.  Assuming now for simplicity that
${\bf Y}$ is an eigenvector of $\MM(\mu)$ with eigenvalue $\lambda$,
we clearly have that
\[
{\bf Y}(NT;\mu,k)=\MM(\mu)^N{\bf Y}(0;\mu,k)=\lambda^N{\bf Y}(0,\mu,k).
\]
Thus, if ${\bf Y}(x;\mu,k)$ decays as $x\to\infty$ it must become unbounded as $x\to-\infty$.  Thus the best we can hope for
is for ${\bf Y}(x;\mu,k)$ to remain bounded on $\RM$, which corresponds in this example to $\lambda\in S^1$, i.e. $|\lambda|=1$.
For more details, see \cite{H} for example.

Following Gardner (see \cite{G1} and \cite{G2}), we define the periodic Evans function for our problem to be
\[
D\left(\mu,k,\lambda\right)=\det\left(\MM(\mu,k)-\lambda{\bf I}\right),~~(\mu,k,\lambda)\in\CM\times\RM\times\CM.
\]
The complex constant $\lambda$ is called the Floquet multiplier and is related to the class of
admissible perturbations in \eqref{spec}.  In particular, notice that $\lambda=1$ corresponds
to $T$-periodic perturbations of the background solution $u$.
Clearly, $D(\mu,k,\lambda)$ is an entire function of $\mu$ and $k$ for each fixed $\lambda\in\CM$
since the coefficient matrix $\HM(x,\mu,k)$ depends as such on $\mu$ and $k$.
This allows an analytical characterization of the $L^2(\RM)$ spectrum of \eqref{spec}:
the generalized spectral problem \eqref{spec} has a non-trivial bounded solution for a given $k\in\RM$
if and only if there exists a $\kappa\in\RM$ such that
\[
D(\mu,k,e^{i\kappa})=0.
\]
In particular, $D(\mu,k,1)=0$ if and only if \eqref{spec} has a non-trivial bounded solution in $L^2_{\rm per}([0,T])$
for a given $k\in\RM$.  Moreover, the following property will be useful in the low frequency analysis conducted later in the
paper.

\begin{lem}\label{gKP:EvansLem}
The function $D(\mu,k,\lambda)$ is an even function of both $\mu$ and $k$.
\end{lem}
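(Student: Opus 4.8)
The plan is to realize the two required symmetries at the level of the coefficient matrix in the first order form ${\bf Y}_x=\HM(x;\mu,k){\bf Y}$ of \eqref{spec}, and then push them through to the monodromy $\MM(\mu,k)=\Phi(T;\mu,k)$, whose characteristic polynomial is $D$. Evenness in $k$ is the easy half: rewriting \eqref{spec} as the scalar fourth order equation
\[
v_{xxxx}+\big(f'(u)-c\big)v_{xx}+\big(2(f'(u))_x+\mu\big)v_x+\big((f'(u))_{xx}+\sigma k^2\big)v=0,
\]
one sees that $k$ enters only through $k^2$. Hence $\HM$, and with it $\Phi(\cdot;\mu,k)$, $\MM(\mu,k)$, and $D(\mu,k,\lambda)$, depend on $k$ only through $k^2$, so $D(\mu,-k,\lambda)=D(\mu,k,\lambda)$ with no further work.

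For the $\mu$ dependence I would exploit the spatial reflection $x\mapsto -x$. Since we normalized $u(0)=u_-$ at a turning point of \eqref{quad1}, the reversibility of that oscillator equation makes the profile even, $u(-x)=u(x)$; consequently $f'(u)$ and $(f'(u))_{xx}$ are even while $(f'(u))_x$ is odd. A direct computation then shows that the constant involution $S:=\mathrm{diag}(1,-1,1,-1)$ satisfies $-S\,\HM(-x;\mu,k)\,S^{-1}=\HM(x;-\mu,k)$ for all $x$, the sign flip of $\mu$ being produced exactly by pairing the odd coefficient $(f'(u))_x$ against the $\mu$ in the $v_x$ term. It follows that $\Psi(x):=S\,\Phi(-x;\mu,k)\,S^{-1}$ solves the $(-\mu)$--system with $\Psi(0)={\bf I}$, whence $\Phi(x;-\mu,k)=S\,\Phi(-x;\mu,k)\,S^{-1}$. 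Setting $x=T$ and using that $\HM$ is $T$--periodic in $x$ (so $\Phi(-T;\mu,k)=\MM(\mu,k)^{-1}$) gives the key identity
\[
\MM(-\mu,k)=S\,\MM(\mu,k)^{-1}\,S^{-1},
\]
and taking determinants, the conjugation drops out to leave $D(-\mu,k,\lambda)=\det\big(\MM(\mu,k)^{-1}-\lambda{\bf I}\big)$.

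To close the argument I would convert $\det(\MM^{-1}-\lambda{\bf I})$ back into $D(\mu,k,\lambda)$. Because the $v_{xxx}$ coefficient is absent we have $\tr\HM\equiv 0$, so Liouville's formula yields $\det\MM(\mu,k)=1$; therefore $\det(\MM^{-1}-\lambda{\bf I})=\det({\bf I}-\lambda\MM)=\lambda^{4}\det(\MM-\lambda^{-1}{\bf I})$, i.e. the clean exact relation $D(-\mu,k,\lambda)=\lambda^{4}D(\mu,k,\lambda^{-1})$. Evaluated at the Floquet multiplier $\lambda=1$ — the co-periodic case driving the low frequency analysis — this already gives $D(-\mu,k,1)=D(\mu,k,1)$, and on $|\lambda|=1$ it produces the symmetry of the $L^2(\RM)$ spectrum under $\mu\mapsto-\mu$. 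The hard part is upgrading this to evenness for \emph{every} fixed $\lambda$: that step is equivalent to the palindromic relation $D(\mu,k,\lambda)=\lambda^4 D(\mu,k,\lambda^{-1})$, i.e. to the Floquet multipliers being closed under $\lambda\mapsto\lambda^{-1}$, equivalently to the period map $\MM$ being symplectic. This is the delicate point, because the $\mu\partial_x$ term in \eqref{spec} obstructs any \emph{constant} symplectic form preserved by $\HM$, so the reciprocity must be extracted from the genuine Hamiltonian ($J=\partial_x$, $L$ self-adjoint) structure of the linearized gKP operator rather than read off the companion system; this is where I would concentrate the technical effort, with the $\lambda=1$ identity above serving as the unconditional fallback actually needed downstream.
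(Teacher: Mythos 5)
Your proof is correct and takes essentially the same route as the paper: both halves rest on $k$ entering only through $k^2$ and on the similarity $\MM(-\mu,k)\sim\MM(\mu,k)^{-1}$ induced by the $x\mapsto -x$, $\mu\mapsto-\mu$ invariance (you merely make explicit, via evenness of the profile and conjugation by $\mathrm{diag}(1,-1,1,-1)$, what the paper asserts in one line), combined with $\det\MM(\mu,k)=1$ to get $D(-\mu,k,\lambda)=\lambda^{4}D(\mu,k,\lambda^{-1})$ and hence evenness at $\lambda=1$. The residual issue you flag for general $\lambda$ is not a defect of your argument relative to the paper: the paper's own proof likewise establishes evenness in $\mu$ only for $D(\mu,k,1)$, which is precisely your ``unconditional fallback'' and all that is used downstream.
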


\begin{proof}
Since the spectral problem $\left(\partial_x^2\mathcal{L}[u]+\sigma k^2\right)v=\mu\partial_x v$
is invariant under the transformation $k\mapsto -k$, it follows that $D(\mu,k,\lambda)$ is an even function
of $k$.  To analyze the parity in $\mu$, we write
\begin{align*}
D(\mu,k,\lambda)&=\det\left(\MM(\mu,k)-\lambda{\bf I}\right)\\
&=\lambda^4+a(\mu,k)\lambda^3+b(\mu,k)\lambda^2+c(\mu,k)\lambda+1
\end{align*}
where $a(\mu,k)=-\tr\left(\MM(\mu,k)\right)$ and $b(\mu,k)=\frac{1}{2}\left(\tr(\MM(\mu,k)^2)-\tr(\MM(\mu,k))^2\right)$.
Since the spectral problem is invariant under the transformation $x\to -x$ and $\mu\to -\mu$, it follows that the matricies
$\MM(\mu,k)$ and $\MM(-\mu,k)^{-1}$ are similar and hence a direct calculation yields
\begin{align*}
D(\mu,k,\lambda)&=\lambda^4\det\left(\MM(-\mu,k)-\frac{1}{\lambda}{\bf I}\right)\\
&=\lambda^4+c(-\mu,k)\lambda^3+b(-\mu,k)\lambda^2+a(-\mu,k)\lambda+1.
\end{align*}
It follows that $c(\mu,k)=a(-\mu,k)$ and $b(\mu,k)=b(-\mu,k)$, and hence
\[
D(\mu,k,1)=2+a(\mu,k)+a(-\mu,k)+\frac{b(\mu,k)+b(-\mu,k)}{2}.
\]
Thus, $D(\mu,k,1)$ is an even function of $\mu$.
\end{proof}

The goal of our analysis is to provide sufficient conditions to ensure that when $0<|k|\ll 1$ the function $\mu\mapsto D(\mu,k,1)$ has a
non-zero real root, corresponding to an exponential instability of the underlying wave.  To this end, we derive
an orientation index by comparing the high frequency and low frequency (in $\mu$) asymptotics of the function $D(\mu,k,1)$.
In particular, we will see that the sign of $D(\mu,k,1)$ for large real $\mu$ equals the sign of $\sigma$.  Thus,
if the quantity $D(0,k,1)$ has the opposite sign of $\sigma$, we can infer the existence of a $\mu^*\in\RM^+$ such that
$D(\mu^*,k,1)=0$ which implies exponential instability of the background solution.  We begin by analyzing the high frequency
behavior of the periodic Evans function.

%

\subsection{High Frequency Limit}

In this section, we study the large real $\mu$ behavior of the periodic Evans function $D(\mu,k,1)$
when $k\neq 0$.  To begin, rescale \eqref{spec} with the change of variables $\tilde{x}=|\mu|^{1/3}x$
to obtain (after dropping the tilde's) the spectral problem
\begin{equation}
\left(-\partial_x^4-|\mu|^{-2/3}\partial_x^2\left(f'(u)+c\right)-\sigma k^2|\mu|^{-4/3}\right)v=\partial_x v.\label{rescale}
\end{equation}
This can be rewritten as a first order system of the form
\begin{equation}
{\bf W}'=\underbrace{\left(
                   \begin{array}{cccc}
                     0 & 1 & 0 & 0 \\
                     0 & 0 & 1 & 0 \\
                     0 & 0 & 0 & 1 \\
                     0 & -1 & 0 & 0 \\
                   \end{array}
                 \right)}_{{\bf H}_0(\mu)}{\bf W}
+\underbrace{\left(
                   \begin{array}{cccc}
                     0 & 0 & 0 & 0 \\
                     0 & 0 & 0 & 0 \\
                     0 & 0 & 0 & 0 \\
                     \chi & A_1|\mu|^{-2/3} & A_2|\mu|^{-2/3} & 0 \\
                   \end{array}
                 \right)}_{{\bf B}(\mu)}{\bf W},\label{rescalesystem}
\end{equation}
where $A_1=-2f''(u)u_x$, $A_2=-f'(u)+c$ and
\be\nn
\chi= \frac{1}{2}A_{1,x}|\mu|^{-2/3}-\sigma k^2 |\mu|^{-4/3} .
\ee
On a heuristic level then, we expect that the monodromy operator for $\mu\gg 1$ will behave like
\[
\MM(\mu)\approx e^{{\bf H}_0|\mu|^{1/3}T}
\]
and hence
\[
D(\mu,k,1)\approx\det\left(e^{{\bf H}_0|\mu|^{1/3}T}-{\bf I}\right)
\]
However, the matrix ${\bf H}_0$ clearly has an eigenvalue of $0$ and hence this heuristic argument
leads us to expect that $D(\mu,k,1)\to 0$ as $\mu\to+\infty$.  From the point of view of an orientation index,
this does not provide us with sufficient information: we must know what the limiting \textit{sign} of the
Evans function is.  Thus, although we expect $D(\mu,k,1)$ vanishes in the limit as $\mu\to\infty$, we must
analyze the situation more closely to determine if the \textit{sign} of $D(\mu,k,1)$ has a limiting value.
This is the content of the following lemma.

\begin{lem}\label{hfLem}
For $k\neq 0$, we have the high frequency limit
\[
\lim_{\mu\to\pm\infty}\sgn\left( D(\mu,k,1)\right)=\sgn(\sigma).
\]
\end{lem}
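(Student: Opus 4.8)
The plan is to read off the sign of $D(\mu,k,1)=\det(\MM(\mu,k)-{\bf I})=\prod_{j=1}^4(\lambda_j-1)$ by tracking the four Floquet multipliers $\lambda_j$ of the rescaled system \eqref{rescalesystem} as $\mu\to+\infty$; the case $\mu\to-\infty$ then follows immediately from the evenness of $D(\mu,k,1)$ in $\mu$ established in Lemma~\ref{gKP:EvansLem}. The constant matrix ${\bf H}_0$ has characteristic polynomial $\nu^4+\nu=\nu(\nu^3+1)$, hence simple eigenvalues $0$, $-1$, and $\tfrac12\pm\tfrac{\sqrt3}{2}i$: one zero (slow) mode, one strongly stable mode, and a conjugate pair of strongly unstable modes, all separated by an $O(1)$ spectral gap. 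First I would fix a constant change of basis putting ${\bf H}_0$ in block-diagonal form $\mathrm{diag}(0,{\bf H}_0^f)$, with ${\bf H}_0^f$ the hyperbolic $3\times3$ block, and record that the right and left null vectors of ${\bf H}_0$ are $(1,0,0,0)^{\top}$ and $(1,0,0,1)$, so that the slow--slow entry of ${\bf B}(\mu)$ in this basis is exactly $\chi$.

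Because ${\bf B}(\mu)=O(|\mu|^{-2/3})$ while the slow and fast spectra of ${\bf H}_0$ are separated by a gap of order one, I would apply the conjugating (tracking) transformation of the appendix to block-diagonalize the full coefficient ${\bf H}_0+{\bf B}$ into a scalar slow equation $z'=\beta(\tilde x;\mu,k)z$ and a $3\times3$ fast equation, the transformation being a near-identity change of variables uniformly in $\tilde x$. The fast block contributes three multipliers whose moduli are bounded away from $1$: the stable one behaves like $e^{-|\mu|^{1/3}T}\to0$, so its factor tends to $-1$, while the unstable pair behaves like $e^{(1/2\pm\sqrt3\,i/2)|\mu|^{1/3}T}$ and, being genuinely nonreal complex conjugates (since $D$ is a real polynomial in $\lambda$), contributes the factor $|\lambda_u-1|^2>0$. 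Thus the three fast factors together carry a definite negative sign for all large $\mu$.

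It remains to determine the sign of the slow factor $\lambda_0-1$, where $\lambda_0=\exp\!\big(\int_0^{|\mu|^{1/3}T}\beta\,d\tilde x\big)$. At leading order $\beta=\chi+(\text{corrections})$, and after returning to the original variable one has $\int_0^{|\mu|^{1/3}T}\chi\,d\tilde x=|\mu|^{1/3}\int_0^T\chi\,dx$; here the $\tfrac12A_{1,x}|\mu|^{-2/3}$ contribution integrates to zero because $A_1=-2f''(u)u_x$ is $T$-periodic, leaving precisely $-\sigma k^2|\mu|^{-1}T$. If the remaining corrections are of lower order this gives $\lambda_0-1=-\sigma k^2|\mu|^{-1}T\,(1+o(1))$, so $\sgn(\lambda_0-1)=-\sgn(\sigma)$ for large $\mu$; combined with the negative fast contribution this yields $\sgn D(\mu,k,1)=\sgn(\sigma)$, as claimed.

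The main obstacle is precisely the control of these corrections. Because the leading first-order average cancels by periodicity, the decisive term $-\sigma k^2|\mu|^{-1}T$ sits at order $|\mu|^{-1}$, which is exactly the order at which the second-order corrections to the slow exponent also enter: products of the $O(|\mu|^{-2/3})$ coefficients $A_1$, $A_2$, $\tfrac12A_{1,x}$, transmitted through the fast Green's function and accumulated over the long interval of length $|\mu|^{1/3}T$, generically produce an $O(|\mu|^{-1})$ contribution to $\int\beta$. Crucially this contribution is independent of $\sigma$, so the clean conclusion $\sgn D=\sgn(\sigma)$ forces its period-average to vanish. Establishing this is the heart of the argument: I would exploit the total-derivative structure $A_1=-2\big(f'(u)\big)_x$ together with the periodicity of the profile, carrying the block-triangularization one further order (as in \cite{HLZ}) so that the secular part of the second-order term is exhibited as the period-integral of an exact derivative and hence vanishes. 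This averaging/cancellation step, rather than the sign bookkeeping, is where the real work lies.
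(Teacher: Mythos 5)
Your proposal follows essentially the same route as the paper's proof: rescale by $|\mu|^{1/3}$, exploit the $O(1)$ spectral gap of ${\bf H}_0$ to split off the neutral mode via the appendix tracking lemma with $T$-periodic conjugators, read off the net negative sign of the three fast Floquet factors, and reduce the slow factor to $\exp\left(-\sigma k^2|\mu|^{-1}T\right)-1\sim-\sigma k^2|\mu|^{-1}$ after periodic averaging. The second-order cancellation you correctly flag as the heart of the matter is exactly what the paper executes, via the explicit near-identity $T$-periodic transformation ${\bf S}$ that exhibits the secular second-order slow term as $\frac{1}{2}A_1A_{1,x}\varepsilon^2=\frac{1}{4}(A_1^2)_x\,\varepsilon^2$, an exact derivative whose period integral vanishes---precisely the mechanism you predicted from the structure $A_1=-2\left(f'(u)\right)_x$.
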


\begin{rem}
Notice this result is somewhat unexpected due to the form of the rescaled equation \eqref{rescale} since
the term $\sigma k^2$ enters at only {\em second} order in $|\mu|^{-2/3}$.  Thus, this is a {\em very}
small term compared with the $\mathcal{O}(|\mu|^{-2/3})$ terms involved.  However, the following proof will
show that, upon averaging, the periodicity of the underlying solution implies cancelation of the lower order effects
and hence the asymptotic behavior for large $\mu$ must be determined by the higher order terms.
\end{rem}

\begin{proof}
First, notice that by Lemma \ref{gKP:EvansLem} it is enough to consider the limit as $\mu\to+\infty$ only.
Consider the rescaled first order system \eqref{rescalesystem} and define $\eps=|\mu|^{-2/3}\ll 1$.
We refer to the constant matrix ${\bf H}_0$
as the principle part and regard the matrix ${\bf B}$ as a matrix of error terms.  We begin by diagonalizing the
principle part.  To this end, let $\lambda=\frac{1}{2}\left(1+i\sqrt{3}\right)$ and notice if we define the matrix
\[
{\bf Q}:=\bp
         -1 & -1 & -1 & 1\\
         1 & -\lambda & -\lambda^* & 0\\
         -1 & \lambda^* & \lambda & 0\\
         1 & 1 & 1 & 0\\
         \ep,
\]
then a straight forward calculation yields
\[
{\bf Q}^{-1}{\bf H_0}{\bf Q}=\bp
                             -1 & 0 & 0 & 0\\
                             0 & \lambda & 0 & 0\\
                             0 & 0 & \lambda^* & 0\\
                             0 & 0 & 0 & 0\\
                             \ep.
\]
Note that $\Re \lambda=\Re \lambda^*$ is positive,
hence the real parts of the diagonal entries of the stable, neutral,
and unstable diagonal blocks of ${\bf Q}^{-1}{\bf H_0}{\bf Q}$ each have a spectral
gap, one from the other.  Using the change coordinates ${\bf W}={\bf QY}$ in \eqref{rescalesystem}, we can consider the
first order system
\[
{\bf Y}'=\left[
   \bp
   -1 & 0 & 0 & 0\\
   0 & \lambda & 0 & 0\\
   0 & 0 & \lambda^* & 0\\
   0 & 0 & 0 & 0\\
   \ep
+
   \underbrace{{\bf Q}^{-1}{\bf BQ}}_{\bf \tilde{B}}
   \right]{\bf Y}
\]
By a brief but tedious calculation, we see that the matrix ${\bf \tilde{B}}$ take the block form
\[
{\bf \tilde{B}}=\left(
\begin{BMAT}{c.c}{c.c}
\mathcal{O}(\eps)  &  \begin{BMAT}{c}{c.c.c}
                                \frac{1}{3}\chi\\
                                \frac{1}{3}\chi\\
                                \frac{1}{3}\chi
                             \end{BMAT}
  \\
\begin{BMAT}{c.c.c}{c}
(A_1-A_2)\eps-\chi & (-\lambda A_1+\lambda^*A_2)\eps-\chi & (-\lambda^*A_1+\lambda A_2)\eps-\chi
\end{BMAT}
& \chi
\end{BMAT}
\right)
\]
where the upper left hand block is $3\times 3$ and the lower right hand block is $1\times 1$.
Now, define the $T$-periodic matrix valued function
\[
{\bf S}={\bf I}_4+\eps\left(
\begin{BMAT}{c.c}{c.c}
0& ~0~\\
  \begin{BMAT}{c.c.c}{c}
  -A_1+A_2+\frac{1}{2}A_{1,x} & -A_1+\frac{\lambda^*}{\lambda}A_2+\frac{1}{2\lambda}A_{1,x} & -A_1+\frac{\lambda}{\lambda*}A_2+\frac{1}{2\lambda*}A_{1,x}
  \end{BMAT}
& ~0~
\end{BMAT}
\right)
\]
where ${\bf I}_4$ is the standard $4\times 4$ identity matrix and again the upper left hand block is $3\times 3$.
Then another straightforward computation implies that
\[
{\bf S}^{-1}{\bf\tilde{B}}{\bf S}=
\left(
  \begin{BMAT}{c.c}{c.c}
  \mathcal{O}(\eps) & \mathcal{O}(\eps)\\
  \mathcal{O}(\eps^3) & \frac{1}{2}A_{1,x}\eps+\eps^2\left(\frac{1}{2}A_1 A_{1,x}-\sigma k^2\right)
  \end{BMAT}
\right)
\]
where again the upper left hand block is $3\times 3$.

Now, noticing that ${\bf S}'=\mathcal{O}(\eps^{3/2})$ and that ${\bf S}$ is a $\mathcal{O}(\eps)$ perturbation of the identity,
it follows that making the variable coefficient change
of variables ${\bf U}={\bf SY}$ yields a first order system of the form
\[
{\bf U}_x=\left[{\bf Q}^{-1}{\bf H}_0{\bf Q}+\left(
  \begin{BMAT}{c.c}{c.c}
  \mathcal{O}(\eps) & \mathcal{O}(\eps)\\
  \mathcal{O}(\eps^{3/2}) & \frac{1}{2}A_{1,x}\eps+\eps^2\left(\frac{1}{2}A_1 A_{1,x}-\sigma k^2\right)
  \end{BMAT}
\right)\right]{\bf U}.
\]
In particular, we see that the resulting coefficient
matrix is approximately block upper triangular with error of order $\mathcal{O}(\eps^{3/2})=\mathcal{O}(|\mu|^{-1})$.
Applying Lemma \ref{reduction} from Appendix A (notice that here $\eta=1$, $N=\mathcal{O}(\eps)$, and $\delta=\mathcal{O}(\eps^{3/2})$) and Remark \ref{periodic} we find that
there is a $T$-periodic 
change of coordinates ${\bf X}={\bf ZU}$ of the form
\[
{\bf Z}=\begin{pmatrix}
I_3 & 0 \\
\Phi & 1 \\
\end{pmatrix},
\]
where $\Phi=O(\eps^{3/2})$ is of dimension $1\times 3$, taking the system to an exact upper block triangular form with
diagonal blocks
\[
-1+\mathcal{O}(\eps),\quad \begin{pmatrix} \lambda & 0\\ 0 & \lambda^* \\ \end{pmatrix} +O(\eps),
\quad \hbox{\rm and }\; \frac{1}{2}A_{1,x}\eps+\eps^2\left(\frac{1}{2}A_1 A_{1,x}-\sigma k^2\right)+\mathcal{O}(\eps^{5/2}).
\]

Finally, by the block-triangular form {\it plus periodicity of the coordinate
changes} we may compute the periodic
Evans function as the product of the periodic Evans function
of the diagonal blocks of this transformed system, integrated
over a period $\tilde T:=T|\mu|^{1/3}$ going to infinity.
The contribution from the stable block is\footnote{ Notice each of the three subsystems gives real value,
since they clearly have real coefficients.} approximately $e^{-|\mu|^{1/3}T}-1$,
so has sign $-1$. Similarly, the unstable block gives a positive sign.
The third block, corresponding to the neutral block, gives approximately
\begin{align*}
\exp\left(\int_0^{|\mu|^{1/3}T}\left(\frac{1}{2}A_{1,x}\eps+ \eps^2\left(\frac{1}{2}A_1 A_{1,x}-\sigma k^2\right)\right)(s)ds\right)&-1\\
&=\exp\left(-\sigma k^2|\mu|^{-1}T\right)-1\\
&\sim -\sigma k^2|\mu|^{-1}
\end{align*}
and hence has the opposite sign as the dispersion parameter $\sigma$.  Notice, here we have used heavily the fact
that the periodicity of the background solution implies
\[
\int_0^{|\mu|^{1/3}T}A_{1,x}(x)dx=0~~\textrm{and}~~\int_0^{|\mu|^{1/3}T}A_1(x)A_{1,x}(x)dx=0
\]
and hence yields cancelation in averaging of the lower order terms.   Combining these results, we find that the periodic Evans function
has sign $\sgn(\sigma)$ for $k\neq 0$ and $\mu\gg 1$ as claimed.
\end{proof}

Notice that one could rework the above proof in the case where the background solution corresponds to a homoclinic
orbit of the traveling wave ODE \eqref{travelode}.  As a result, Lemma \ref{hfLem} still holds in the solitary wave
setting.  This result seems to be new to the literature, and may give valuable insight to the transverse instability
analysis in the solitary wave case.

\subsection{Low Frequency Analysis and Instability}

Now that we have a handle on the limiting sign of $D(\mu,k,1)$ as $\mu\to\pm\infty$, we turn our attention to determining
the sign of the quantity $D(0,k,1)$ for $|k|\ll 1$.  For, once we have this information we see that the negativity of
the orientation index
\[
\sigma\cdot\sgn\left(D(0,k,1)\right)
\]
provides a sufficient condition for transverse instability of the underlying periodic wave $u$.
To this end, we utilize the methods of \cite{J2}, which builds off the methods of \cite{BrJ}, to derive
an asymptotic expansion of the function $D(0,k,1)$ for $|k|\ll 1$.  However, it should be
pointed out that the analysis in this case is seemingly more delicate than that
considered by \cite{J2} due to the fact that the
unperturbed spectral problem at $\mu=0$, i.e. 
\begin{equation}
\partial_x^2\mathcal{L}[u]v=0,\label{nullnp}
\end{equation}
which does not define a Hamiltonian equation.  In particular, this equation is not reducible to quadrature
and hence
one can not use integrability of the corresponding traveling wave ODE
\[
\left(-u_{xxx}-f(u)_x+cu_x\right)_x=0
\]
to construct a basis for the monodromy operator at $\mu=0$, $k=0$.  However, one can use the integrability of
the traveling wave ODE \eqref{travelode} for the gKdV equation to construct\footnote{
Notice that here we are considering the traveling wave ODE
as a formal differential equation with out any reference to boundary conditions.}
three linearly independent solutions of equation \eqref{nullnp}: namely, the functions $u_x$, $u_a$, and $u_E$
are easily seen to satisfy
\[
\mathcal{L}[u]u_x=0,~~\mathcal{L}[u]u_E=0,~~\mathcal{L}[u]u_a=-1
\]
(see \cite{BrJ} for details) and hence provide a basis of solutions for the differential equation
\[
\partial_x\mathcal{L}[u]v=0.
  \]
Thus, we are missing one null direction of the (formal) operator $\partial_x^2\mathcal{L}[u]$:
we need a solution to the equation $\mathcal{L}[u]v=x$.  However,
notice that the functions $u_x$ and $u_E$ provide two linearly independent solutions of
the differential equation\footnote{Here, again, we consider the formal operator $\mathcal{L}[u]$ without any reference to boundary conditions}
$\mathcal{L}[u]v=0$, and hence
one can use variation of parameters 
to solve the nonhomogeneous equation.  After a straightforward calculation
it is seen that
\[
\phi(x):=\left(\int_0^x su_E(s)ds\right)u_x(x)-\left(\int_0^x su_s(s)ds\right)u_E(x)
\]
is linearly independent from $u_x$, $u_a$, and $u_E$ and satisfies $\mathcal{L}[u]\phi=x$, 
and hence we can use these four functions to construct the corresponding monodromy matrix at the origin.
Using perturbation theory then, we should be able to determine how these functions bifurcate as $k$ varies but remains very small and hence
be able to determine a leading order expansion of the periodic Evans function near $k=0$.
This is the content of the following lemma.

\begin{lem}\label{lfLem}
The following asymptotic relation holds in a neighborhood of $k=0$:
\[
D(0,k,1)=-\left(PT-M^2\right)\{T,M\}_{a,E}\left(\sigma k^2\right)^2+\mathcal{O}(k^6).
\]
\end{lem}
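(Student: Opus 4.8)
The plan is to treat $D(0,k,1)=\det\big(\MM(0,k)-{\bf I}\big)$ by regular perturbation theory in $k^2$ about the explicit basis $\{u_x,u_E,u_a,\phi\}$ of $\ker(\partial_x^2\mathcal{L}[u])$ at $k=0$. First I would record the return map $R(k)$ obtained by expressing the translates $e_i(\cdot+T;k)$ of a $k$-dependent basis of solutions in terms of $e_j(\cdot;k)$; since $\MM(0,k)$ is conjugate to $R(k)$, one may compute $\det(R(k)-{\bf I})$ instead. At $k=0$ the transformation laws follow from differentiating the profile identity $u(x+T;a,E,c)=u(x;a,E,c)$, giving $u_x(x+T)=u_x(x)$, $u_E(x+T)=u_E(x)-T_E u_x(x)$, and $u_a(x+T)=u_a(x)-T_a u_x(x)$, while $\mathcal{L}[u]\phi=x$ together with $\mathcal{L}[u]u_a=-1$ forces $\phi(x+T)=\phi(x)-T u_a(x)+\beta u_E(x)+\alpha u_x(x)$ for suitable constants $\alpha,\beta$. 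Hence $N:=R(0)-{\bf I}$ is strictly upper triangular, so $D(0,0,1)=0$, and a direct inspection shows $N$ has rank $2$: its kernel is spanned by the two genuinely periodic solutions $u_x$ and $T_a u_E-T_E u_a$.

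Because $D$ is even in $k$ by Lemma \ref{gKP:EvansLem}, I can write $R(k)-{\bf I}=N+k^2R_1+k^4R_2+\cdots$. Expanding the $4\times4$ determinant $\det(N+k^2R_1+\cdots)$ and using that any permutation product contains at most two nonzero entries of the rank-two nilpotent matrix $N$, the $k^0$ and $k^2$ coefficients vanish and the first surviving term is of order $k^4$. That term is built from products of two entries of $N$ with two entries of $R_1$, and organizes into $k^4\det\big(W^\top R_1 V\big)$ times a nonzero normalization constant coming from the nondegenerate part of $N$, where the columns of $V$ span $\ker N$ (the periodic solutions above) and those of $W$ span $\ker N^\top$. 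This already explains the $(\sigma k^2)^2$ scaling: $R_1$ is linear in the forcing $\sigma v_0$, so the $2\times2$ determinant is quadratic in $\sigma$.

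To obtain $R_1$ I would solve the first perturbation equation $\partial_x^2\mathcal{L}[u]v_1=\sigma v_0$ for each $v_0$ by variation of parameters—integrating twice and then inverting $\mathcal{L}[u]$ against $u_x,u_E$—and read off the new secular coefficients. The left null space $\ker N^\top$ corresponds to the periodic solutions of the adjoint operator $\mathcal{L}[u]\partial_x^2$, namely the constant $1$ and a mass-type function $w$ with $\partial_x^2 w=u_x$; pairing these against the forcing produces period integrals of $v_0$ against $1$ and $u$. Two identities should drive the evaluation: the Gram determinant $\det\!\begin{pmatrix}T&M\\ M&P\end{pmatrix}=PT-M^2$ arising from pairing $\{1,u\}$ with itself, and
\[
\int_0^T\big(T_a u_E-T_E u_a\big)\,dx=T_aM_E-T_EM_a=\{T,M\}_{a,E},
\]
which follows from $M_a=u_-T_a+\int_0^T u_a\,dx$ and $M_E=u_-T_E+\int_0^T u_E\,dx$. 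Substituting these into $\det(W^\top R_1V)$ and tracking the normalization should collapse the coefficient to $-(PT-M^2)\{T,M\}_{a,E}$, yielding $D(0,k,1)=-(PT-M^2)\{T,M\}_{a,E}(\sigma k^2)^2+\mathcal{O}(k^6)$.

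The main obstacle will be the honest computation of $R_1$: propagating the $\mathcal{O}(k^2)$ forcing through the fourth-order operator $\partial_x^2\mathcal{L}[u]$ requires a careful double integration followed by variation of parameters for $\mathcal{L}[u]$, and the secular (linearly growing) pieces must be isolated correctly, since it is precisely the cancellations enforced by periodicity that promote the naive $\mathcal{O}(k^2)$ contribution up to $\mathcal{O}(k^4)$. Equally delicate is fixing the normalizations of $V$ and $W$ consistently—for instance through a Wronskian normalization of the solution basis—so that the nondegenerate factor multiplying $\det(W^\top R_1V)$ is pinned down and the overall sign is unambiguous; I expect this sign bookkeeping, rather than any single integral, to be the part most prone to error.
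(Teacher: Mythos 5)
Your proposal is correct in substance and rests on the same basic machinery as the paper---the explicit basis $u_x,u_a,u_E,\phi$, variation of parameters for the $\mathcal{O}(k^2)$ correction, and evenness in $k$---but it organizes the decisive determinant evaluation in a genuinely different way. The paper works with the concrete solution matrix $\WM(x,0,k)$ normalized at the turning point $u(0)=u_-$, computes $\delta\WM(0,0)$ and the $\mathcal{O}(k^2)$ column corrections explicitly (via the formula for $\WM(x,0,0)^{-1}e_4$ proved in Appendix B), and then evaluates the full $4\times 4$ determinant by row operations and computer algebra, collapsing the result with the identities $V'(u_-)\,\partial_E u_-=1$ and $V'(u_-)\,\partial_a u_-=u_-$. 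You instead exploit the structure of $N=R(0)-{\bf I}$: your monodromy transformation laws (from differentiating $u(x+T(a,E,c);a,E,c)=u(x;a,E,c)$ and from $\mathcal{L}[u]\phi=x$, $\mathcal{L}[u]u_a=-1$) encode exactly the same information as the paper's $\delta\WM(0,0)$, and your rank-two reduction $\det(N+k^2R_1+\cdots)=c\,k^4\det\bigl(W^\top R_1 V\bigr)+\mathcal{O}(k^6)$ is a valid Schur-complement-type fact that holds even though $N$ is non-semisimple here ($\mathrm{ran}\,N\cap\ker N\neq\{0\}$, as $N^2\neq 0$). This buys two things the paper obtains only by inspection: it shows a priori that $D(0,k,1)=\mathcal{O}(k^4)$, and that the $\mathcal{O}(k^2)$ variation in the $\phi$ direction never enters at leading order---in your column-replacement expansion the only surviving sets are $\{1,2\}$ and $\{1,3\}$, which never touch the $\phi$ column, matching the paper's observation that only the $u_x$, $u_a$, $u_E$ variations are needed. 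It also replaces the CAS computation by a $2\times 2$ determinant paired against the adjoint periodic solutions $1$ and $w$ with $w''=u_x$ (which indeed can be chosen $T$-periodic, $w'=u-M/T$), where your two identities---$PT-M^2$ as the Gram determinant of $\{1,u\}$ and $\int_0^T(T_a u_E-T_E u_a)\,dx=T_aM_E-T_EM_a=\{T,M\}_{a,E}$, both correct since $u(T)=u(0)=u_-$---produce exactly the paper's factors. The one place you remain schematic, the normalization constant and overall sign, is precisely where the paper leans on its explicit turning-point normalization: to finish your route you must fix $c$ through a factorization $N=P\,({\bf I}_2\oplus 0)\,Q$ (equivalently, Wronskian-normalized bases of $\ker N$ and $\ker N^\top$) and confirm independence of the unknown constants $\alpha,\beta$ in the $\phi$-monodromy---which does hold, again because the surviving terms never sample the $\phi$ column. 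With that bookkeeping carried out, your argument is a cleaner, more structural derivation of the same expansion, and arguably better suited to generalization than the paper's direct computation.
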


\begin{proof}
We begin by writing the linearized equation \eqref{spec}
as a first order system with coefficient matrix
\[
\HM(x,\mu,k)=\left(
               \begin{array}{cccc}
                 0 & 1 & 0 & 0 \\
                 0 & 0 & 1 & 0 \\
                 0 & 0 & 0 & 1 \\
                -\sigma k^2-f'''(u)u_x^2-f''(u)u_{xx} & -2f''(u)u_x-\mu & -f'(u)+c & 0 \\
               \end{array}
             \right).
\]
We now define the matrix $\WM(x,\mu,k)$ as the matrix solution of the first order system $Y'=\HM(x,\mu,k)Y$
such that
\begin{equation}
\WM(x,0,0)=\left(
             \begin{array}{cccc}
               u_x & u_a & u_E & \phi \\
               u_{xx} & u_{ax} & u_{Ex} & \phi_x \\
               u_{xxx} & u_{axx} & u_{Exx} & \phi_{xx} \\
               u_{xxxx} & u_{axxx} & u_{Exxx} & \phi_{xxx} \\
             \end{array}
           \right),\label{eqn:solnmatrix}
\end{equation}
and we fix the initial condition $\WM(0,\mu,k)=\WM(0,0,0)$ for all $(\mu,k)\in\CM\times\RM$.  Defining
$\delta\WM(\mu,k)=\WM(x,\mu,k)\big{|}_{x=0}^T$, a straightforward calculation gives
\[
\delta\WM(0,0)=\left(
                 \begin{array}{cccc}
                   0 & 0 & 0 & -\frac{\partial u_{-}}{\partial E}\int_0^Txu_x(x)dx \\
                   0 & V'(u_{-})T_a & V'(u_{-})T_E & -V'(u_{-})\int_0^Txu_E(x)dx \\
                   0 & 0 & 0 & -T+V''(u_{-})\frac{\partial u_{-}}{\partial E}\int_0^Txu_x(x)dx \\
                   0 & -V'(u_{-})V''(u_{-})T_a & -V'(u_{-})V''(u_{-})T_E & V''(u_{-})V'(u_{-})\int_0^Txu_E(x)dx \\
                 \end{array}
               \right).
\]
Since $\delta\WM(0,k)$ is analytic in $k^2$ by Lemma \ref{gKP:EvansLem}, 
it follows that
%
\[
D(0,k,1)=\frac{\det(\delta\WM(0,k))}{\det(\WM(0,0,1))}=\mathcal{O}(k^4)
\]
for $|k|\ll 1$. 
Moreover, since the first column of the matrix $\delta\WM(0,k)$ 
is $\mathcal{O}(k^2)$, one can also easily see that
the $\mathcal{O}(k^2)$ variation in the $\phi$ direction contributes to terms in $D(0,k,1)$ of order $\mathcal{O}(k^6)$ near $k=0$.  Thus,
in order to compute the $\mathcal{O}(k^4)$ variation in the Evans function, we need only compute
the $\mathcal{O}(k^2)$ variation in the $u_x$, $u_a$, and $u_E$ directions\footnote{Note that
finding a useful expression for the $\mathcal{O}(k^2)$ variation in the $\phi$ direction
would  be a very daunting task, since one would have to apply variation of parameters to solve a problem
with non-homogeneity which was constructed using variation of parameters.}.

Computing the necessary variations 
can be done using the variation of parameters formula.
To this end, we define the vector solutions corresponding to $u_x$, $u_a$, $u_E$ and $\phi$ be given by $Y_1$, $Y_2$, $Y_3$ and $Y_4$,
respectively and define
\[
\frac{\partial}{\partial k^2}Y_j(T,0,k)\big{|}_{k=0}=\WM(T,0,0)\int_0^T\WM(x,0,0)^{-1}\left(Y_j(x)\cdot e_j\right)e_4
                                                                            dx,
\]
where $\cdot$ represents the standard inner product on $\RM^4$,
$e_j$ is the $j^{th}$ column of the identity matrix on $\RM^4$,
and $\WM(T,0,0)=\WM(0,0,0)+\delta\WM(0,0)$ where
\[
\WM(0,0,0)=\left(
             \begin{array}{cccc}
               0 & \frac{\partial u_{-}}{\partial a} & \frac{\partial u_{-}}{\partial E}  & 0 \\
               -V'(u_{-}) & 0 & 0 & 0 \\
               0 & 1-V''(u_{-})\frac{\partial u_{-}}{\partial a}  & -V''(u_{-})\frac{\partial u_{-}}{\partial E}  & 0 \\
               V''(u_{-})V'(u_{-}) & 0 & 0 & -1 \\
             \end{array}
           \right).
\]
Moreover, in Appendix B we will show that
\begin{equation}
\WM(x,0,0)^{-1}e_4
=\left(
   \begin{array}{c}
     -\int_0^x\int_0^su_E(z)dzds \\
     -x \\
     \int_0^xu(s)ds \\
     -1 \\
   \end{array}
 \right),\label{eqn:vinv}
\end{equation}
which allows for straight forward computation of the necessary variations.
Thus, we have
\[
\delta\WM(0,k)=\delta\WM(0,0)-\left(\sum_{j=1}^4\frac{\partial}{\partial k^2}Y_j(T,0,k)\big{|}_{k=0}\otimes e_j\right)\sigma k^2+\mathcal{O}(k^4)
\]
and since $\det(\WM(0,0,0))=1$ it follows that
\[
D(0,k,1)=\det\left(\delta\WM(0,0)-\left(\sum_{j=1}^3\frac{\partial}{\partial k^2}Y_j(T,0,k)\big{|}_{k=0}\otimes e_j\right)
                    \sigma k^2\right)+\mathcal{O}(k^6).
\]

Elementary row operations allow one to simplify the matrix involved in the above determinant.  Indeed, a large simplification
is possible if replaces the third row with the result from multiplying the first row by $V''(u_{-})$ and adding it to the third row,
and similarly replace the fourth row with $V''(u_{-})$ times the second row plus the fourth row.  In particular, these operations
imply the fourth entry in the first column is of order $\mathcal{O}(k^4)$ and hence does not enter into the calculation.
The resulting expressions are still too long to explicitly write out here, but can be easily handled by using a
computer algebra system.  Indeed, a direct calculation yields
\begin{align*}
D(0,k,1)&=\left(Q_1\frac{\partial u_{-}}{\partial E}+Q_2\frac{\partial u_{-}}{\partial a}\right)V'(u_-)\{T,M\}_{a,E}(\sigma k^2)^2
+\mathcal{O}(k^6)
\end{align*}
where $Q_1=(M^2-PT-MTu_-+T^2u_-^2)$ and $Q_2=(MT-T^2u_-)$.  Finally, by noticing from \eqref{quad1} that
\[
V'(u_-)\frac{\partial u_{-}}{\partial E}=1~~\textrm{and}~~V'(u_-)\frac{\partial u_{-}}{\partial a}=u_-
\]
we have
\[
\left(Q_1\frac{\partial u_{-}}{\partial E}+Q_2\frac{\partial u_{-}}{\partial a}\right)V'(u_-)=M^2-PT,
\]
which completes the proof.
\end{proof}

\begin{rem}
In \cite{BrJ}, Bronski and Johnson derive the low-frequency expansion for the Evans function $D_{gKdV}(\mu,\kappa)$ for the
gKdV equation \eqref{gkdv} in the spectral parameter $\mu$.  In particular, it was shown that
\[
D_{gKdV}(\mu,1)=-\frac{1}{2}\det\left(\frac{\partial(T,M,P)}{\partial(a,E,c)}\right)\mu^3+\mathcal{O}(|\mu|^4)
\]
and hence an index which detects exponential instabilities to co-periodic perturbations was derived by comparing the
sign of the above Jacobian determinant to the limiting behavior for $\mu\to+\infty$.  We believe that it would
be interesting and beneficial to derive the corresponding low-frequency expansion for the Evans function $D(\mu,k,\kappa)$
considered in this paper, as it may give a better understanding of the connection (if there is any) between the generation
of transverse instabilities and uni-directional instabilities.  Indeed, such analysis could serve as a starting point
for a corresponding transverse \emph{stability} analysis of periodic waves.
While it seems natural that the above three-by-three Jacobian determinant should control the (spectral) low-frequency
limit, it is not clear how the degeneracy of the gKP equation (corresponding to the extra x-spatial derivative)
affects the situation.  In particular, the required calculations seem to be considerably more complicated than those considered in Lemma \ref{lfLem}
and a useful identification of the leading order behavior for $|\mu|\ll 1$ is yet to be obtained.
\end{rem}

Combining Lemmas \ref{hfLem} and \ref{lfLem}, we have a sufficient condition for exponential instability
of a periodic traveling wave of the gKdV to long wavelength transverse perturbations in the gKP equation.
This is the content of our main theorem, which we now state.

\begin{thm}\label{orientation}
Let $u=u(\cdot;a,E,c)$ be a periodic traveling wave of the gKdV equation \eqref{gkdv}.  Then $u$ is
spectrally unstable to transverse perturbations in the gKP equation \eqref{gkp} if the
product $\sigma\cdot\{T,M\}_{a,E}$ is positive.
\end{thm}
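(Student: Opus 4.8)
The plan is to derive the instability by combining the high-frequency sign computed in Lemma~\ref{hfLem} with the low-frequency expansion of Lemma~\ref{lfLem} through an elementary intermediate value argument on the real-analytic function $\mu\mapsto D(\mu,k,1)$. First I would fix $k$ with $0<|k|\ll 1$ and recall from Lemma~\ref{hfLem} that $\sgn\left(D(\mu,k,1)\right)=\sgn(\sigma)$ for all sufficiently large real $\mu$. Next I would read off from Lemma~\ref{lfLem} the leading-order behavior near $\mu=0$, namely
\[
D(0,k,1)=-\left(PT-M^2\right)\{T,M\}_{a,E}\left(\sigma k^2\right)^2+\mathcal{O}(k^6).
\]
Since $\left(\sigma k^2\right)^2=k^4>0$, the sign of $D(0,k,1)$ for $0<|k|\ll1$ is $-\sgn\left((PT-M^2)\{T,M\}_{a,E}\right)$. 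The Cauchy--Schwarz inequality applied to $M=\int_0^T u\,dx$ and $P=\int_0^T u^2\,dx$ gives $M^2\le PT$, with equality only for constant $u$; thus for a genuine periodic wave $PT-M^2>0$, and the sign of $D(0,k,1)$ reduces to $-\sgn\left(\{T,M\}_{a,E}\right)$.

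The core of the argument is then a sign comparison. The orientation index is $\sigma\cdot\sgn\left(D(0,k,1)\right)=-\sigma\cdot\sgn\left(\{T,M\}_{a,E}\right)$, using $\sgn(\sigma)=\sigma$ since $\sigma=\pm1$. Hence the hypothesis $\sigma\cdot\{T,M\}_{a,E}>0$ is exactly the statement that $D(0,k,1)$ and $\sigma$ carry opposite signs, i.e.\ $\sgn\left(D(0,k,1)\right)=-\sgn(\sigma)$, while the high-frequency limit forces $\sgn\left(D(\mu,k,1)\right)=\sgn(\sigma)$ for large $\mu$. Since $D(\cdot,k,1)$ is real-valued and continuous (indeed entire) in $\mu$ on the real axis, the intermediate value theorem produces a point $\mu^*\in(0,\infty)$ with $D(\mu^*,k,1)=0$. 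By the characterization of the spectrum via the periodic Evans function established earlier, this real positive root $\mu^*$ corresponds to a nontrivial bounded solution of the generalized spectral problem~\eqref{spec} at Floquet multiplier $\lambda=1$, i.e.\ a $T$-periodic eigenfunction with $\Re(\mu^*)=\mu^*\neq0$, which is precisely a spectral (exponential) instability to a long wavelength transverse perturbation in the gKP equation.

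The main obstacle, and the only place needing care, is ensuring the two asymptotic regimes are genuinely comparable for a \emph{single} admissible value of $k$. Lemma~\ref{hfLem} is uniform in $\mu\to+\infty$ only for each fixed $k\neq0$, whereas Lemma~\ref{lfLem} holds for $|k|\ll1$; I would therefore first choose $k$ small enough that the low-frequency expansion determines the sign of $D(0,k,1)$, and only then invoke the high-frequency limit at that same $k$. One should also confirm that the leading coefficient in Lemma~\ref{lfLem} is nonzero under the hypotheses, which is guaranteed by $PT-M^2>0$ together with the standing non-degeneracy assumption $\{T,M\}_{a,E}\neq0$ built into $\sigma\cdot\{T,M\}_{a,E}>0$. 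With these sign determinations in hand the instability conclusion is immediate, so the theorem follows directly from Lemmas~\ref{hfLem} and~\ref{lfLem} and the Cauchy--Schwarz positivity of $PT-M^2$.
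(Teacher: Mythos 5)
Your proposal is correct and takes essentially the same route as the paper: its proof likewise combines Lemmas \ref{hfLem} and \ref{lfLem} with the positivity of $PT-M^2$ (which the paper attributes to Jensen's inequality, equivalent here to your Cauchy--Schwarz argument) and concludes instability from the negativity of the orientation index $\sgn\left(D(0,k,1)\right)\cdot\lim_{\mu\to+\infty}\sgn\left(D(\mu,k,1)\right)$. Your explicit intermediate value argument and your care in fixing $k$ small before invoking the high-frequency limit merely spell out steps the paper leaves implicit.
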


\begin{proof}
Clearly the negativity of the orientation index
\[
\sgn\left(D(0,k,1)\right)\cdot\lim_{\mu\to+\infty}\sgn\left(D(\mu,k,1)\right)
\]
for $0<|k|\ll 1$ implies the desired instability.  Since $PT-M^2>0$ by Jensen's inequality,
the result follows by Lemmas \ref{hfLem} and \ref{lfLem}.
\end{proof}

\begin{rem}
Notice the instability detected in Theorem \ref{orientation} is that of spectral
transverse instabilities to perturbations which are co-periodic in the $x$-direction
with low-frequency oscillations in the transverse ($y$) direction.  In the solitary
wave case, a general criterion for spectral transverse instability was recently
provided by Rousset ant Tzvetkov \cite{RT2}.  While it seems plausible that
such a criterion may exist when the underlying wave is spatially periodic,
we must note that the detected instability would not be to \emph{low}-frequency
oscillations in the transverse direction.  Indeed, in \cite{RT2} the transverse
frequency must be large enough that the kernel of a particular linear operator
be simple.  Once such a frequency $k_0\neq 0$ is found, an implicit function
type argument is used to prove the existence of a spectral curve $\mu\to(k(\mu),\mu)$
in a neighborhood of $\mu=0$ with $(k(0),0)=(k_0,0)$, thus proving spectral instability.
In the present work however, we are proving the existence of a map
$k\to\mu(k)$ defined for $|k|\ll 1$ such that $\mu(k)$ bounded away from zero for all $k\neq 0$
and such that $\mu(k)$ is an eigenvalue of \eqref{spec} for the associated small transverse
frequency $k$.
\end{rem}

%
As previously noted Theorem \ref{orientation} is only of interest when the underlying periodic wave is
a stable solution of the corresponding one dimensional problem.  In \cite{BrJK}, the nonlinear (orbital)
stability of such solutions to periodic perturbations in the gKdV equation was studied and the
Jacobian $\{T,M\}_{a,E}$ was seen to play a significant role.  Therein, it was shown that
such waves can be nonlinearly stable to such perturbations regardless of the sign of $\{T,M\}_{a,E}$,
assuming certain conditions on the perturbation and other geometric quantities related to the underlying
wave and the conserved quantities of the PDE flow.  Theorem \ref{orientation} however demonstrates
the direct influence the sign of this Jacobian determinant has on the stability to perturbations
in higher dimensional models (see also the recent work of Johnson \cite{J2} concerning the transverse instability
of periodic gKdV waves in the generalized Zakharov-Kuznetsov equations where the same Jacobian was seen
to control the low-frequency behavior of the corresponding Evans function).
In particular, we immediately have the following interesting (and seemingly unexpected) corollary.

\begin{corr}
A periodic traveling wave of the gKdV for which $\{T,M\}_{a,E}\neq 0$ can never be
spectrally stable to transverse perturbations in the gKP equation for both signs of dispersion.
\end{corr}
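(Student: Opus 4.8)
The plan is to obtain the corollary as an immediate logical consequence of Theorem \ref{orientation}, the only real content being a sign-bookkeeping argument. The key observation is that the Jacobian $\{T,M\}_{a,E}$ is an intrinsic quantity of the periodic wave profile $u=u(\cdot;a,E,c)$, determined entirely by the gKdV traveling-wave data $(a,E,c)$; in particular it does not depend on the dispersion parameter $\sigma$, which enters only through the transverse coupling in the gKP equation \eqref{gkp}. Thus, as $\sigma$ ranges over the two admissible values $\pm 1$, the value of $\{T,M\}_{a,E}$ stays fixed while the sign of the product $\sigma\cdot\{T,M\}_{a,E}$ flips.

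First I would record that, under the hypothesis $\{T,M\}_{a,E}\neq 0$, the product $\sigma\cdot\{T,M\}_{a,E}$ is strictly positive for exactly one choice of $\sigma\in\{+1,-1\}$, namely $\sigma=\sgn\left(\{T,M\}_{a,E}\right)$. For that choice we have $\sigma\cdot\{T,M\}_{a,E}=\left|\{T,M\}_{a,E}\right|>0$, so Theorem \ref{orientation} applies verbatim and yields spectral instability of $u$ to long wavelength transverse perturbations in the corresponding gKP equation. Consequently $u$ fails to be spectrally stable for at least one sign of dispersion, which is precisely the assertion that it cannot be stable for \emph{both} signs.

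The only point to guard against is a vacuous reading, so I would phrase the conclusion contrapositively: stability for both signs of dispersion would force $\sigma\cdot\{T,M\}_{a,E}\leq 0$ simultaneously at $\sigma=+1$ and $\sigma=-1$, i.e. $\{T,M\}_{a,E}\leq 0$ together with $-\{T,M\}_{a,E}\leq 0$, hence $\{T,M\}_{a,E}=0$, contradicting the hypothesis. There is no analytic obstacle here, since all the substantive work has already been carried out in Lemmas \ref{hfLem} and \ref{lfLem} and packaged into Theorem \ref{orientation}; the corollary merely exploits the freedom to choose $\sigma$ against a fixed nonzero invariant of the underlying wave.
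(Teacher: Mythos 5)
Your proposal is correct and matches the paper's reasoning exactly: the paper states the corollary as an immediate consequence of Theorem \ref{orientation}, relying on precisely the observation you make, namely that $\{T,M\}_{a,E}$ is independent of $\sigma$, so for a nonzero Jacobian the product $\sigma\cdot\{T,M\}_{a,E}$ is positive for one of the two dispersion signs, forcing instability in that case. Your contrapositive phrasing is a careful but equivalent restatement of the same one-line argument.
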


\begin{rem}
It should be noted that, unlike the ODE case, there are no general theorems insuring that spectral
instability implies nonlinear instability.  However, in the recent work \cite{RT1} it was
shown in the solitary wave context that, indeed, spectral transverse instability of KdV waves
in the KP-I equation (as described in the introduction) can be converted to a nonlinear instability
result.  It seems plausible that the methods utilized could apply in our case in order to convert
Theorem \ref{orientation} into a nonlinear instability result (at least in the case of periodic
transverse perturbations).  This would be an interesting direction for future investigation.
\end{rem}

We now point out several corollaries of Theorem \ref{orientation}.  In the case
of the KdV equation \eqref{eqn:kdv}, it is known that
all periodic traveling wave solutions are both spectrally stable to localized perturbations (see \cite{BD})
and nonlinearly (orbitally) stable to co-periodic perturbations (see \cite{BrJK} or \cite{J1}).  In particular,
the transverse stability of such solutions in the KP equation \eqref{eqn:KP} is of interest in this case.  Moreover,
it was shown in \cite{BrJK} that the Jacobian $\{T,M\}_{a,E}$ can be expressed as
\[
\{T,M\}_{a,E}=\frac{-T^2~V'\left(\frac{M}{T}\right)}{12~{\rm disc}\left(E-V(\cdot;a,c)\right)},
\]
where ${\rm disc}(R(\cdot))$ represents the discriminant of the polynomial $R$.
Since $V'$ is clearly strictly convex in this case, it follows by Jensen's inequality that
\[
V'\left(\frac{M}{T}\right)<\frac{1}{T}\int_0^TV'(u(x))dx=0.
\]
For an alternate proof of this fact, see \cite{J2}.
Moreover, notice that for any $(a,E,c)\in\RM^3$ for which \eqref{travelode} admits
a periodic solution of the KdV the equation $E=V(u;a,c)$ has three
solutions in $u$ and hence the discriminant must be positive.  Therefore,
we have that $\{T,M\}_{a,E}>0$ for all periodic traveling wave solutions of the KdV equation.
This proves the following corollary of Theorem \ref{orientation}.

\begin{corr}\label{kdvcor}
All periodic traveling wave solutions of the KdV are unstable to long wavelength transverse
perturbations in the KP equation when $\sigma>0$.
\end{corr}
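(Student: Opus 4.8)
The plan is to deduce the corollary directly from Theorem~\ref{orientation} by pinning down the sign of the Jacobian $\{T,M\}_{a,E}$ in the KdV case. Since $\sigma>0$ forces $\sgn(\sigma)=1$, the instability criterion $\sigma\cdot\{T,M\}_{a,E}>0$ collapses to the single inequality $\{T,M\}_{a,E}>0$, which I would verify for \emph{every} periodic profile of \eqref{eqn:kdv}. The first step is to record the special structure of the KdV nonlinearity: here $f(u)=u^2/2$, so the effective potential $V(u;a,c)=F(u)-au-\tfrac{c}{2}u^2$ is a cubic polynomial in $u$, whence $V'(u)=f(u)-a-cu$ is a quadratic with positive leading coefficient and is therefore \emph{strictly convex}. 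This convexity is the feature the whole argument will exploit.

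Next I would invoke the closed-form evaluation of the Jacobian established in \cite{BrJK},
\[
\{T,M\}_{a,E}=\frac{-T^2\,V'\!\left(\tfrac{M}{T}\right)}{12\,\mathrm{disc}\left(E-V(\cdot;a,c)\right)},
\]
and determine the signs of numerator and denominator separately. For the denominator, the existence of a genuine periodic orbit of \eqref{travelode} forces the cubic $E-V(\cdot;a,c)$ to have three distinct real roots (the simple turning points $u_\pm$ together with a third root), and a cubic with three distinct real roots has strictly positive discriminant; so the denominator is positive.

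The heart of the matter is the sign of the numerator, equivalently the sign of $V'(M/T)$, and this is where I expect the only real work to lie. The key identity is that the period-average of $V'(u)$ vanishes: integrating \eqref{travelode} once (equivalently, differentiating \eqref{quad1}) gives the profile relation $u_{xx}=-V'(u)$, so that $\int_0^T V'(u(x))\,dx=-\int_0^T u_{xx}\,dx=0$ by periodicity of $u_x$. Since $M/T$ is precisely the mean of $u$ over a period while $V'$ is strictly convex and $u$ is non-constant, Jensen's inequality yields $V'(M/T)<\tfrac{1}{T}\int_0^T V'(u)\,dx=0$. Hence $-T^2 V'(M/T)>0$, the numerator is positive, and combined with the positive denominator we obtain $\{T,M\}_{a,E}>0$. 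Feeding this into Theorem~\ref{orientation} with $\sigma>0$ gives $\sigma\cdot\{T,M\}_{a,E}>0$ and therefore spectral transverse instability.

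The main obstacle is thus not the reduction but the verification of the numerator's sign; the delicate points are the vanishing-mean identity for $V'(u)$ (which rests on the profile ODE together with $T$-periodicity) and the strictness in Jensen's inequality, which requires $u$ to be genuinely non-constant. The latter is guaranteed here because we are dealing with a periodic orbit rather than an equilibrium, so the argument goes through uniformly over the whole family of KdV periodic waves.
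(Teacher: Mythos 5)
Your proof is correct and takes essentially the same route as the paper: it invokes the same closed-form expression for $\{T,M\}_{a,E}$ from \cite{BrJK}, establishes positivity of the discriminant from the three real roots of the cubic $E-V(\cdot;a,c)$, and uses the same strict-convexity/Jensen argument to conclude $V'\left(M/T\right)<0$, feeding the resulting sign into Theorem~\ref{orientation}. Your explicit verification of the vanishing period-average $\int_0^T V'(u(x))\,dx=0$ via the profile relation $u_{xx}=-V'(u)$, and your remark on strictness of Jensen for non-constant $u$, merely fill in details the paper leaves implicit.
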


It is interesting to note that it is known that all solitary wave solutions of the KdV
are transversely unstable in the KP equation when $\sigma>0$.  Thus, Corollary \ref{kdvcor}
seems to be somewhat expected.  Moreover, it turns out that the Galilean invariance of the KdV
implies we can always choose $a=0$, and hence (up to translation) the periodic traveling
waves of the KdV form only a {\em two} parameter family of solutions.  This family can be expressed
explicitly in terms of the Jacobi elliptic function as
\begin{equation*}
u(x,t)=u_0+12k^2\kappa^2\cn^2\left(\kappa\left(x+\left(8k^2\kappa^2-4\kappa^2+u_0\right)t\right),k\right),
\end{equation*}
where $u_0$ is an arbitrary parameter (taking the role of $E$) and $k$ is the elliptic modulus.
We refer to such a solution as a cnoidal wave solution of the KdV.
As a result of Corollary \ref{kdvcor}, it follows that all cnoidal wave solutions of the KdV
are unstable to long wavelength transverse perturbations in the KP-I equation.

We now move on to consider a periodic traveling wave solutions of the focusing mKdV equation
\[
u_t=u_{xxx}+u^2u_x
\]
with positive wave speed $c>0$.  When $a=0$, the corresponding traveling wave ODE admits
two distinct classes of periodic solutions: when $E>0$ the wave can again be expressed in terms
of the Jacobi elliptic function $\cn$, while when $E<0$ there either
exists no solution (if $|E|$ is sufficiently large) or the solution can be expressed in terms
of the Jacobi elliptic function $\dn$ and hence represents a dnoidal wave.  In the recent
work of \cite{BrJK}, it was shown that both sets of solutions are nonlinearly (orbitally) stable
to co-periodic perturbations, although the cnoidal solutions of sufficiently long wavelength
were shown to be (spectrally) unstable to periodic perturbations of large period\footnote{However, cnoidal waves
of smaller period seem spectrally stable to perturbations of sufficiently large period.} (see also \cite{DK}).  Moreover, the sign of
the Jacobian $\{T,M\}_{a,E}$ was analyzed for both the cnoidal
and dnoidal wave solutions of the focusing mKdV\footnote{In fact, the situation was analyzed without
the restriction of $a=0$ in which case all periodic traveling wave solutions of the focusing
mKdV can not be expressed simply in terms of a Jacobi elliptic function.  However, we
only consider the case $a=0$ here for simplicity.} and was seen to be positive for all dnoidal
solutions and negative for all cnoidal solutions.  As a result, we have the following corollary of
Theorem \ref{orientation}.

\begin{corr}
All cnoidal wave solutions of the focusing mKdV equation are unstable to long wavelength transverse
perturbations in the focusing mKP equation with $\sigma<0$,
while the dnoidal wave solutions are unstable to such perturbations when $\sigma>0$.
\end{corr}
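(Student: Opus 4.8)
The plan is to obtain the corollary as an immediate application of Theorem \ref{orientation}, which reduces the question of transverse instability to the determination of the \emph{sign} of the single Jacobian $\{T,M\}_{a,E}$. That theorem guarantees spectral instability to long wavelength transverse perturbations in the gKP equation precisely when $\sigma\cdot\{T,M\}_{a,E}>0$; hence, once the sign of $\{T,M\}_{a,E}$ is known on each family of solutions, the statement is simply a matter of matching signs. Note that the positivity $PT-M^2>0$ needed to invoke the theorem is already folded into its hypothesis via Jensen's inequality, so nothing further is required on that front.

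First I would specialize the framework to the focusing mKdV equation $u_t=u_{xxx}+u^2u_x$ (so that in the notation of \eqref{gkdv} one has $f(u)_x=u^2u_x$) and take $a=0$, as in the statement. With $a=0$ the nonlinear oscillator equation \eqref{quad1} possesses two distinct families of bounded periodic orbits, distinguished by the sign of the integration constant $E$: for $E>0$ the profile is cnoidal (an elliptic $\cn$ function), while for $E<0$ it is dnoidal (an elliptic $\dn$ function). I would only need to identify which branch a given solution lies on, not its detailed profile.

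The essential ingredient is then the sign of $\{T,M\}_{a,E}$ for these two families, which is exactly the computation already carried out in \cite{BrJK}: there it is shown that $\{T,M\}_{a,E}<0$ for every cnoidal solution and $\{T,M\}_{a,E}>0$ for every dnoidal solution. Granting this, I finish by a direct comparison of signs: on the cnoidal branch $\sigma\cdot\{T,M\}_{a,E}>0$ exactly when $\sigma<0$, and on the dnoidal branch $\sigma\cdot\{T,M\}_{a,E}>0$ exactly when $\sigma>0$, so that in each case Theorem \ref{orientation} yields the claimed instability. The only nontrivial step is this sign evaluation, and since it is supplied by \cite{BrJK} there is no remaining obstacle; the corollary reduces to substituting these known signs into our instability criterion.
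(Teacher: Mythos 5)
Your proposal is correct and follows essentially the same route as the paper: specialize to the $a=0$ focusing mKdV families (cnoidal for $E>0$, dnoidal for $E<0$), import from \cite{BrJK} that $\{T,M\}_{a,E}$ is negative on the cnoidal branch and positive on the dnoidal branch, and substitute these signs into the criterion $\sigma\cdot\{T,M\}_{a,E}>0$ of Theorem \ref{orientation}. Your observation that $PT-M^2>0$ via Jensen's inequality is already handled inside the theorem is also consistent with the paper's proof of that theorem.
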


Continuing, one can use the general elliptic function calculations of \cite{BrJK} in the case of a power-law
nonlinearity $f(u)=u^{p+1}$, $p\in\mathbb{N}$, to determine
the sign of the Jacobian $\{T,M\}_{a,E}$ for any periodic traveling wave solution of \eqref{gkdv}
in terms of moments of the background solution $u$ with respect to the density
\[
\frac{1}{\sqrt{E-V(\cdot;a,c)}}.
\]
As such, Theorem \ref{orientation} can be utilized to provide a transverse instability result for
such power-law nonlinearities.  In other cases, it seems that one must (in general) resort
to numerical methods to approximate $\{T,M\}_{a,E}$.

\section{Conclusions \& Discussion}

In this paper, we analyzed the spectral instability of a periodic traveling wave solutions of the generalized Korteweg-
de Vries equation to long wavelength transverse perturbations in the
generalized Kadomtsev-Petviashvili equation.  In particular, we constructed a seemingly nonstandard
orientation index by comparing the low and high frequency behavior of the periodic Evans function when
the transverse wave number $k$ is non-zero.  We found that in the high frequency limit, the Evans
function $D(\mu,k,1)$ converged to zero as $\mu\to\pm\infty$, which is insufficient to conclude a instability theory.
However, after taking into account higher order effects, it was found that by the periodicity
of the underlying wave and the resulting cancelation in averaging procedures that the
Evans function for non-zero transverse wave numbers favors a particular \emph{sign} as $\mu\to\pm\infty$ which is determined
precisely by the dispersion parameter $\sigma$: such a phenomenon seems to be new
in the literature.  Thus, an instability index follows
by comparing the sign of $\sigma$ with the value $D(0,k,1)$ for $k\neq 0$.  Utilizing the
methods of \cite{BrJ} and \cite{J2} then, we were able to explicitly compute the leading order
variation of the function $D(0,k,1)$ in $k$ in terms of a Jacobian from the traveling wave parameters
to the period and mass of the background solution.  This Jacobian was shown to be (generically) non-zero in the physically important cases
of the KdV and mKdV equations, and their resulting signs were inferred from the recent work of \cite{BrJK} immediately
yielding instability results in these cases.

It is interesting to note that the Jacobian arising in the low frequency expansion of the periodic Evans function
has already been seen to hold vital information concerning the stability of the periodic traveling
wave solutions of the gKdV.  Indeed, in \cite{BrJK} and \cite{J1} this Jacobian arose naturally in the
nonlinear stability analysis of such solutions to periodic perturbations in the gKdV equation, while
in \cite{J2} it was seen again to control the low frequency behavior of the periodic Evans function
when considering the spectral instability of a periodic gKdV wave to long wavelength transverse
perturbations in the generalized Zakharov-Kuznetsov equation (which also arises in plasma physics).
Thus, it would be very interesting to better understand the physical meaning of the Jacobian $\{T,M\}_{a,E}$
as this may better illuminate the stability theories described above.

Another interesting direction would be to complement the transverse \emph{instability} analysis in this
paper with a corresponding \emph{stability} theory.  That is, to derive sufficient conditions to guarantee
a periodic traveling wave solution of the gKdV is transversely stable to perturbations in the
gKP equation.  While this certainly may be possible using the Evans function techniques of this
paper, much more delicate analysis is needed.  In the context of the KdV equation,
one may be able to use the integrable structure of the KP-II equation to prove transverse
spectral stability.  Such techniques are prevalent in the solitary wave theory and have recently been
employed in the periodic wave setting to prove spectral stability
to localized perturbations for several model equations (see \cite{BD}, \cite{BDN}, and \cite{NB}).
When this integrable structure does not exist, however,
it may be more natural to consider a variational characterization of the stability problem and extend the
methods of \cite{GSS}, \cite{J1}, and \cite{BrJK}.

We also note that, after the completion of this work, our attention was brought to the
recent work of Rousset and Tzvetkov \cite{RT3} in which the authors considered the
transverse spectral instability of KdV solitary waves to perturbations in the KP-I equation.
In contrast to the ODE techniques utilized in this paper, the authors
use variational techniques to present a rather elegant and simple approach
relying only on properties of the differential operators involved which are rather easy to check (due to the
self-adjointness of the operators involved).  We believe it would be interesting as a future
direction of study to see if these techniques could apply in the case where the underlying
wave is spatially periodic and to compare the results to those derived in this paper.
Such a comparison would hopefully help illuminate the mechanism behind the instability.
On the other hand, the ODE  
techniques used in this paper are quite robust allowing for straightforward
numerical implementation (plotting Evans curves and computing winding numbers)
and applying not only to equations with a Hamiltonian like structure but also to
more complicated situations arising in the context of systems of nonlinear conservation laws.
Moreover, the degeneracy in the high frequency analysis is of independent interest adding new techniques
to the tool box in the study of nonlinear dispersive waves utilizing asymptotic
tracking/reduction results familiar from shock wave analysis in the conservation law setting.
%

Finally, as pointed out in the text the high frequency analysis conducted in this paper translates
directly to the solitary wave setting, and hence opens the door to an analogous instability theory
using Evans function techniques.  In fact, the high frequency limit seems easier to discern in the
solitary wave case as one does not have to deal with averaging effects of the coefficient
functions.  While we have not yet conducted the relevant low frequency analysis,
this could provide valuable insights concerning the transverse instability of a solitary traveling
wave of the gKdV in the gKP equation.

\medbreak
{\bf Acknowledgement.}
Thanks to Jared Bronski, Bj\"{o}rn Sandstede, Todd Kapitula, and Bernard Deconinck for
many useful conversations regarding the early stages of this work.  Their comments and suggestions
were invaluable to this project.  We would also like to thank a referee for bringing our attention
to the work of Rousset and Tzvetkov and for several helpful comments regarding the structure
of the KP equations.




\appendix

\section{A Block-Triangular Tracking Lemma}\label{s:track}
Consider an approximately block-triangular system
\begin{equation}
W'= A^p(x)W:= \bp M_1 & N \\ \delta \Theta & M_2 \ep(x,p)W,
\label{blockdiag}
\end{equation}
where $\Theta$ is a uniformly bounded matrix, $\delta(x)$ scalar,
and $p$ a vector of parameters,
satisfying a pointwise spectral gap condition
\begin{equation}
\min \sigma(\Re M_1)- \max \sigma(\Re M_2)
\ge \eta(x)>0
\, \text{\rm for all } x.
\label{gap}
\end{equation}
(Here as usual $\Re N:= (1/2)(N+N^*)$ denotes the
``real'', or symmetric part of $N$.)
Then, we have the following block-triangular version of the
{\it tracking/reduction lemma} of \cite{MaZ3,PZ}.
For related results, see \cite{HLZ}.

\begin{lemma}\label{reduction}
Consider a system \eqref{blockdiag} under the gap assumption
\eqref{gap}, with $\Theta$ uniformly bounded and
$\eta\in L^1_{\rm loc}$.
If $\sup (\delta/\eta)(x)$ is sufficiently small,
then there exists a unique bounded linear
transformation
\be\label{form}
S=\bp I & 0\\
\Phi & I\ep,
\ee
possessing the same regularity with respect to $p$
as do coefficients $M_j$ and $N$,
such that the change of coordinates $W=SZ$
converts the approximately triangular system \eqref{blockdiag} to an exactly block
triangular system
\be\label{exactsys}
Z'= \tilde A^p(x)Z:= \bp \tilde M_1 & \tilde N \\ 0 & \tilde M_2 \ep(x,p)Z
\ee
where
\be\label{tildecoeffs}
\tilde M_1:= M_1+ \Phi N, \qquad
\tilde M_2:= M_2- \Phi N, \qquad
\tilde N:= N,
\ee
with
\be\nn
\sup|\Phi| \le C \sup(\delta/\eta)
\ee
and
\ba\label{ptwise}
|\Phi(x)|&\le
C\int_x^{+\infty} e^{\int_y^x \eta(z)dz} \delta(y)dy,
\qquad
|\Phi(x)| \le C
\int_{-\infty}^{x} e^{\int_y^x -\eta(z)dz} \delta(y)dy.
\ea
where the constant $C$ depends only on the size of $\Theta$.
\end{lemma}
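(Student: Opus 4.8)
The plan is to treat \eqref{form} as a \emph{gauge transformation} and read off what $\Phi$ must satisfy. Substituting $W=SZ$ into \eqref{blockdiag} gives $Z'=\big(S^{-1}A^pS-S^{-1}S'\big)Z$, so the transformed system is exactly block upper triangular precisely when the $(2,1)$ block of $S^{-1}A^pS-S^{-1}S'$ vanishes. With $S$ of the special lower-triangular form \eqref{form}, a direct block multiplication (using $S^{-1}=\bp I & 0\\ -\Phi & I\ep$ and $S'=\bp 0&0\\ \Phi' & 0\ep$) shows this happens if and only if $\Phi$ solves the matrix Riccati equation
\[
\Phi' = M_2\Phi - \Phi M_1 - \Phi N \Phi + \delta\,\Theta,
\]
and that, once this holds, the diagonal and off-diagonal blocks are exactly those recorded in \eqref{tildecoeffs} and the system \eqref{exactsys} is obtained. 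Thus the whole lemma reduces to producing a unique bounded solution $\Phi$ of this Riccati equation, with the quantitative bounds claimed.

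\textbf{Propagator estimate and integral reformulation.}
Let $\Omega_i(x,y)$ denote the propagator of $v'=M_i(x)v$, and define the linear ``conjugation'' flow $\mathcal{T}(x,y)\Psi:=\Omega_2(x,y)\,\Psi\,\Omega_1(y,x)$, which propagates the linear part $\Psi\mapsto M_2\Psi-\Psi M_1$ of the Riccati equation. The crucial input is that the \emph{symmetric-part} gap \eqref{gap} controls growth rates directly: differentiating $|v|^2$ gives $\min\sigma(\Re M_i)\,|v|\le \tfrac{d}{dx}|v|\le \max\sigma(\Re M_i)\,|v|$, a logarithmic-norm estimate valid even for $\eta\in L^1_{\rm loc}$. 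Combining the two factors yields the exponential dichotomy bound $\|\mathcal{T}(x,y)\|\le C\,e^{-\int_y^x \eta(z)\,dz}$ for $y\le x$, with $C$ depending only on the size of $\Theta$. Since this forward flow is contracting, the unique bounded solution is the one selected by integrating from $-\infty$, i.e. the fixed point of
\[
\mathcal{F}[\Phi](x):=\int_{-\infty}^x \mathcal{T}(x,y)\Big(\delta(y)\,\Theta(y)-\Phi(y)\,N(y)\,\Phi(y)\Big)\,dy.
\]

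\textbf{Contraction, regularity, and pointwise bounds.}
I would close the argument by a Banach fixed-point estimate on $\mathcal{F}$ in $L^\infty$, on a ball of radius comparable to $\sup(\delta/\eta)$. The linear term is handled by the normalization identity $\int_{-\infty}^x \eta(y)\,e^{-\int_y^x \eta}\,dy=1$ (since $\eta(y)e^{-\int_y^x\eta}=\partial_y e^{-\int_y^x\eta}$), which gives $\|\mathcal{F}[0]\|_\infty\le C\sup(\delta/\eta)\,\|\Theta\|_\infty$ \emph{without} any lower bound on $\eta$; the quadratic term $\Phi N\Phi$ is higher order in the same small quantity, so $\mathcal{F}$ maps the ball into itself and is a contraction once $\sup(\delta/\eta)$ is small. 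Uniqueness among \emph{bounded} solutions follows because any two differ by a bounded solution of the homogeneous linearized equation, which the dichotomy forces to vanish. Regularity of $\Phi$ in $p$ is inherited from $M_j,N$ by the uniform (parameter-dependent) contraction principle, differentiating the fixed-point relation. Finally, the representation $\Phi=\mathcal{F}[\Phi]$ together with the propagator bound reads off $\sup|\Phi|\le C\sup(\delta/\eta)$ and the two weighted integral bounds \eqref{ptwise}. The main obstacle I anticipate is technical rather than conceptual: deriving the dichotomy bound on $\mathcal{T}$ from the pointwise gap \eqref{gap} under only $\eta\in L^1_{\rm loc}$, and then making the \emph{quadratic} Riccati nonlinearity close in the contraction using smallness of $\sup(\delta/\eta)$ rather than of $\delta$ itself—this is exactly where the averaging identity above is essential.
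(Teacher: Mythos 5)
Your proposal is correct and follows essentially the same route as the paper: reduce the lemma to the Riccati equation $\Phi' = M_2\Phi - \Phi M_1 - \Phi N\Phi + \delta\Theta$, use the gap \eqref{gap} to get forward exponential decay of the linear conjugation flow $\Psi \mapsto M_2\Psi - \Psi M_1$, solve by Duhamel as a fixed point of $\Phi(x)=\int_{-\infty}^x e^{\cM(x-y)}\bigl(\delta\Theta - \Phi N\Phi\bigr)(y)\,dy$ via contraction on a ball of radius $O(\sup(\delta/\eta))$, and inherit $p$-regularity through the fixed-point construction. The only (cosmetic) difference is that the paper first rescales $d\tilde x/dx=\eta(x)$ to normalize $\eta\equiv 1$, whereas you keep general $\eta$ and absorb it through the identity $\int_{-\infty}^x \eta(y)\,e^{-\int_y^x\eta}\,dy\le 1$, and you derive the dichotomy bound by logarithmic norms rather than the paper's tensor-product basis argument --- both equivalent in substance.
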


\begin{proof}
By the change of coordinates $x\to \tilde x$, $\delta \to \tilde \delta:=
\delta/\eta$ with
$d\tilde x/dx=\eta(x)$,
we may reduce to the case $\eta\equiv {\rm constant}= 1$ treated in \cite{MaZ3}.
Dropping tildes, we find by direct computation that
$Z=S^{-1}W$ satisfies \eqref{exactsys} for $S$ of form \eqref{form}
if and only if
\eqref{tildecoeffs} and
\be\nn
\Phi'=
(M_2 \Phi - \Phi M_1) + Q(\Phi),
\ee
where $Q$ is the quadratic matrix polynomial
$
Q(\Phi):=
\delta \Theta -  \Phi N \Phi. 
$
Viewed as a vector equation, this has the form
\be\nn
 \Phi'= \cM \Phi +  Q(\Phi),
\ee
with linear operator
$\cM \Phi:= M_2 \Phi - \Phi M_1$.
Note that a basis of solutions of the decoupled equation
$ \Phi'= \cM \Phi$
may be obtained as the tensor product $\Phi=\phi \tilde \phi^*$
of bases of solutions of $\phi'=M_2 \phi$ and
$\tilde \phi'= -M_1^* \tilde \phi$, whence we obtain from
\eqref{gap}
\be
e^{\cM z}\le Ce^{-\eta z}, \quad \hbox{\rm for }\; z>0,
\label{expbd}
\ee
or uniform exponentially decay in the forward direction.

Thus, assuming only that $\Phi$ is bounded at $-\infty$, we obtain
by Duhamel's principle the integral fixed-point equation
\be
\Phi(x)= \CalT \Phi(x):=
\int_{-\infty}^x e^{\cM (x-y)} Q(\Phi)(y)
\,dy.
\label{inteqn}
\ee
Using \eqref{expbd}, we find that $\CalT$ is a contraction
of order $O(\delta/\eta)$ for $\Phi$ on a ball of radius of
the same order, hence \eqref{inteqn} determines
a unique solution for $\delta/\eta$ sufficiently small, which,
moreover, is order $\delta/\eta$ as claimed.
(Here, we are using the normalization $\eta=1$.)
Finally, substituting
$Q(\Phi)=O(\delta+|\Phi|^2)=O(\delta)$ in
\eqref{inteqn}, we obtain
$$
|\Phi(x)|\le
C\int_{-\infty}^x e^{\eta (x-y)} \delta(y)
\,dy
$$
in $\tilde x$ coordinates, or, in the original $x$-coordinates,
\eqref{ptwise}.
Regularity with respect to parameters is inherited as usual
through the fixed-point construction via the Implicit Function Theorem.
\end{proof}

\begin{rem}
Although we do not use it here, an important observation of the above
proof is that for $\eta$ constant and $\delta$ decaying at exponential rate
strictly slower that $e^{-\eta x}$ as $x\to +\infty$,
we find from \eqref{ptwise} that $\Phi(x)$ decays like $\delta/\eta$
as $x\to +\infty$,
while if $\delta(x)$ merely decays monotonically as $x\to -\infty$, we
find that $\Phi(x)$ decays like $(\delta/\eta)$ as $x\to -\infty$.
\end{rem}

\begin{rem}\label{L1}
Though we do not use it here,
an important observation of \cite{MaZ3,PZ} is that
hypothesis \eqref{gap} of Lemma \ref{reduction} may be weakened to
\begin{equation}\nn
\min \sigma(\Re M_1^\varepsilon)- \max \sigma(\Re M_2^\varepsilon)
\ge \eta(x) +\alpha(x,p)>0
\end{equation}
with no change in the conclusions,
for any $\alpha$ satisfying a uniform $L^1$ bound
$|\alpha(\cdot,p)|_{L^1}\le C_1$.
(Substitute
$e^{\cM x}\le Ce^{C_1}e^{-\eta z}$
for \eqref{expbd}, with no other change in the proof.)
This allow us to neglect commutator terms in some of the
more delicate applications of tracking: for example, the
high-frequency analysis of \cite{MaZ3}.
\end{rem}

\begin{rem}\label{periodic}
In the special case that $A^p$ is $T$-periodic in $x$, we obtain by uniqueness
that $\Phi$ is $T$-periodic in $x$ as well.
\end{rem}

\section{Variation of Parameters Calculation}
In this appendix our goal is to justify equation \eqref{eqn:vinv}, which was seen to be a crucial
step in the low-frequency analysis of Section 3.  For brevity, however, we will consider only the
most difficult case.  To begin, define the scalar valued function $A_0(x):= e_1^\dag \WM(x,0,0)^{-1}e_4$,
where $\WM(x,0,0)$ is defined in \eqref{eqn:solnmatrix} and $\dag$ represents the vector adjoint,
and note from \eqref{eqn:vinv} we wish to prove
$A_0=-\int_0^x\int_0^su_E(z)dzds$.  By definition, we see that
\[
A_0=\left(u_{axx}u_{Ex}-u_{ax}u_{Exx}\right)\phi+\left(u_a u_{Exx}-u_{axx}u_E\right)\phi_x
         +\left(u_{ax}u_E-u_a u_{Ex}\right)\phi_{xx}.
\]
Using \eqref{quad1}, we have that
\begin{align*}
u_a u_{Exx}-u_{axx}u-E &= -u_aV''(u)u_E+u_E\left(V''(u)u_a-1\right)\\
&=-u_E.
\end{align*}
Noting that $\partial_x\left(u_a u_{Ex}-u_{ax}u_E\right)=u_au_{Exx}-u_{axx}u_E$, it follows
that
\[
u_{ax}u_E-u_a u_{Ex}=\int_0^xu_E(s)ds.
\]
Moreover, using \eqref{quad1} along with the fact that
\[
u_x\left(u_{ax}u_E-u_au_{Ex}\right)=uu_E-u_a,
\]
it follows that
\begin{align*}
u_x\left(u_{ax}u_{Exx}-u_{axx}u_{Ex}\right)&=-u_{Ex}u_x-V''(u)\left(uu_E-u_a\right)\\
&=-u_{Ex}u_x-V''(u)u_x\left(u_{ax}u_E-u_au_{Ex}\right)\\
&=-u_{Ex}u_x-V''(u)u_x\int_0^xu_E(s)ds.
\end{align*}
Therefore, using the above equalities along with the definition of the function $\phi$, we have
\begin{align*}
A_0&=-x\int_0^xu_E(s)ds\\
&+\left(\left(u_{Ex}+V''(u)\int_0^xu_E(s)ds\right)u_x-u_Eu_{xx}+u_{xxx}\int_0^xu_E(s)ds\right)\int_0^xsu_E(s)ds\\
&+\left(-\left(u_{Ex}+V''(u)\int_0^xu_E(s)ds\right)u_E-u_Eu_{Ex}+u_{Exx}\int_0^xu_E(s)ds\right)\int_0^xsu_s(s)ds.
\end{align*}
Now, as above one can show
\begin{align*}
\left(u_{Ex}+V''(u)\int_0^xu_E(s)ds\right)u_x&-u_Eu_{xx}+u_{xxx}\int_0^xu_E(s)ds\\
&=u_{Ex}u_x-u_{E}u_{xx}+\left(V''(u)u_x+u_{xxx}\right)\int_0^xu_E(s)ds\\
&=1
\end{align*}
since $u_{xxx}=-V''(u)u_x$ by \eqref{gkdv}.  Similarly, it follows that
\[
-\left(u_{Ex}+V''(u)\int_0^xu_E(s)ds\right)u_E-u_Eu_{Ex}+u_{Exx}\int_0^xu_E(s)ds=0
\]
and hence
\begin{align*}
A_0(x)&=-x\int_0^xu_E(s)ds+\int_0^xsu_{E}(s)ds\\
&=\int_0^x(s-x)u_E(s)ds\\
&=-\int_0^x\int_0^su_E(z)dzds
\end{align*}
as claimed.  The rest of the derivation of equation \eqref{eqn:vinv} is handled similarly, although
the necessary calculations are considerably simpler.



\begin{thebibliography}{GMWZ2}

{\footnotesize

\bibitem[APS]{APS} J. C. Alexander, R. L. Pego, and R. L. Sachs, \emph{On the transverse instability of solitary waves in the Kadomtsev-Petviashvili equation},
Phys. Lett. A., 226 (1997).

\bibitem[Be1]{Be1} T.B. Benjamin, \emph{Instability of periodic wavetrains in nonlinear dispersive systems},  Proc. Roy. Soc. A , 299  (1967).

\bibitem[Be2]{Be2} T. B. Benjamin, \emph{The stability of solitary waves}, Proc. Roy. Soc. (London) Ser. A, 328 (1972). 

\bibitem[BF]{BF} T. B. Benjamin and J. E. Feir, \emph{The disintegration of wave trains on deep water. Part 1. Theory.},
J. Fluid Mech. 27 (1967).

\bibitem[BD]{BD} N. Bottman and B. Deconinck, \emph{KdV cnoidal waves are linearly stable}, preprint.

\bibitem[BDN]{BDN} N. Bottman, B. Deconinck, and Michael Nivala, \emph{Elliptic solutions of the defocusing NLS equation are stable},
preprint.

\bibitem[BrJ]{BrJ} J. C. Bronski and M. Johnson, \emph{The modulational instability for a generalized Korteweg-de Vries equation},
ARMA, DOI 10.1007/s00205-009-0270-5 (2009).


\bibitem[BrJK]{BrJK}  J. C. Bronski, M. Johnson, and T. Kapitula, \emph{An index theorm for the stability of periodic traveling
waves of KdV type}, preprint.

\bibitem[Bo]{Bo} J. L. Bona, \emph{On the stability theory of solitary waves}, Proc. Roy. Soc. (London) Ser. A, 344 (1975), no. 1638. 

\bibitem[DK]{DK} B. Deconinck and T. Kapitula, \emph{On the orbital (in)stability of spatially periodic stationary solutions
of generalized Korteweg-de Vries equations}, preprint.


\bibitem[G1]{G1} R. A. Gardner, \emph{On the structure of the spectra of periodic travelling waves}, J. Math. Pures Appl. (9), 72 
(1993), no. 5.

\bibitem[G2]{G2} R. A. Gardner, \emph{Spectral analysis of long wavelength periodic waves and applications}, J.
Reine Angew. Math. 491 (1997).

\bibitem[GSS]{GSS} M. Grillakis, J. Shatah, and W. Strauss, \emph{ Stability theory of solitary waves in the presence of symmetry. I,II},
             J. Funct. Anal.  (1987), no. 74.

\bibitem[H]{H} D. Henry. Geometric theory of semilinear parabolic equations, volume 840 of
Springer Lecture Notes in Math. Springer-Verlag, 1981.

\bibitem[HK]{HK} M. H\v{a}r\v{a}gu\c{s} and T. Kapitula, \emph{On the spectra of periodic waves for infinite-dimensional Hamiltonian systems},
Physica D., 237, no. 20 (2008).  


\bibitem[HLZ]{HLZ} J. Humpherys, O. Lafitte, and K. Zumbrun,
{\it Stability of viscous shock profiles in the high Mach number
limit,} to appear, CMP (2009).

\bibitem[J1]{J1} M. Johnson, \emph{Nonlinear stability of periodic traveling wave solutions of the generalized Korteweg-de Vries equation},
SIMA, 41 no. 5 (2009).

\bibitem[J2]{J2} M. Johnson, \emph{The Transverse Instability of Periodic Waves in Zakharov-Kuznetsov Type Equations},
SAM, DOI: 10.1111/j.1467-9590.2009.00473.x (2010).

\bibitem[KP]{KP} B. B. Kadomtsev and V. I. Petviashvili, \emph{On the stability of solitary waves in weakly dispersive media},
Sov. Phys. Dokl., 15 (1970).

\bibitem[KTN]{KTN} T. Kataoka, M. Tsutahara, and Y. Negoroo, \emph{Transverse instability of solitary waves in the generalized Kadomtsev-Petviaschvili equation},
Phys. Rev. Lett., 84 (2000).

\bibitem[L]{L} M. J. Lighthill, \emph{Contributions to the theory of waves in non-linear dispersive systems},
J. Inst. Math. Applic. 1 (3) (1965).


\bibitem[MaZ3]{MaZ3} C. Mascia and K. Zumbrun,
{\it Pointwise Green function bounds for shock profiles of
systems with real viscosity.}
Arch. Ration. Mech. Anal.  169  (2003),  no. 3, 177--263.

\bibitem[NB]{NB} M. Nivala and B. Deconinck, \emph{Periodic finite-genus solutions of the KdV equation are orbitally stable}, preprint.

\bibitem[NZ]{NZ}  T. Nguyen and K. Zumbrun, \emph{Long-time stability of large-amplitude noncharacteristic
boundary layers for hyperbolic parabolic systems}, to appear, J. Math. Pure Appl.; preprint (2008).


\bibitem[PJ]{PJ} H. R. Pakzad and K. Javidan, \emph{Solitons of the Kadomstev-Petviashvili (KP) and modified KP (mKP) equations
for dust acoustic solitary waves in dusty plasmas with variable dust charge and nonthermal ions}, J. Phys.: Conference
Series, 96 (2008).

\bibitem[PW]{PW} R. L. Pego and M. I. Weinstein, \emph{Eigenvalues and instabilities of solitary waves}, Philos. Trans. Roy. Soc. London Ser. A.,
340 (1992), no. 1656. 

\bibitem[PZ]{PZ} R. Plaza and K. Zumbrun, \emph{An Evans function approach to
spectral stability of small-amplitude shock profiles}, J. Disc. and
Cont. Dyn. Sys. 10 (2004).

\bibitem[RT1]{RT2} F. Rousset and N. Tzvetkov, \emph{A simple criterion of transverse linear instability for solitary waves}, preprint.

\bibitem[RT2]{RT3} F. Rousset and N. Tzvetkov, \emph{Transverse nonlinear instability for two-dimensional dispersive models},
Ann. Inst. H. Poincaré Anal. Non Linéaire 26 (2009).

\bibitem[RT3]{RT1} F. Rousset and N. Tzvetkov, \emph{Transverse nonlinear instability of solitary waves for some Hamiltonian PDE's},
J. Math. Pure et App., 90 (2008).

\bibitem[TRR]{TRR} S. K. Turitsyn, J. Jull Rasmussen, and M. A. Raadu, \emph{Stability of weak double layers},
Royal Institute of Technology, Stockholm, TRITA-EPP-91 -01 (1991).

\bibitem[W]{W} Whitham, G. B. \emph{Non-linear dispersion of water waves}, J. Fluid Mech., 27 (1967).

\bibitem[Za]{Za} V. Zakharov, \emph{Instability and nonlinear oscillations of solitons}, JEPT Lett., 22 (1975).

\bibitem[Z6]{Z6} K. Zumbrun,
{\it Stability of detonations in the ZND limit}, preprint (2009).









}

\end{thebibliography}
\end{document}